\definecolor{red}{RGB}{255,25,25}
\definecolor{blue}{RGB}{25,50,200}
\newtheorem{theorem}{Theorem}[section]
\crefname{theorem}{Theorem}{Theorems}
\newtheorem{lemma}[theorem]{Lemma}
\crefname{lemma}{Lemma}{Lemmas}
\newtheorem{proposition}[theorem]{Proposition}
\crefname{proposition}{Proposition}{Propositions}
\newtheorem{prop}[theorem]{Proposition}
\crefname{prop}{Proposition}{Propositions}
\crefname{corollary}{Corollary}{Corollaries}
\crefname{cor}{Corollary}{Corollaries}
\crefname{conjecture}{Conjecture}{Conjectures}
\crefname{conj}{Conjecture}{Conjectures}
\newtheorem*{conj*}{Conjecture}
\crefname{conj}{Conjecture}{Conjectures}
\crefname{conjA}{Conjecture}{Conjecture}
\crefname{conjB}{Conjecture}{Conjecture}
\crefname{conjC}{Conjecture}{Conjecture}
\crefname{conjDk}{Conjecture}{Conjecture}
\crefname{conjD}{Conjecture}{Conjecture}
\crefname{conjH}{Conjecture}{Conjecture}
\crefname{conjGr}{Conjecture}{Conjecture}
\theoremstyle{definition}
\newtheorem{definition}[theorem]{Definition}
\crefname{definition}{Definition}{Definitions}
\crefname{defn}{Definition}{Definitions}
\crefname{example}{Example}{Examples}
\crefname{notation}{Notation}{Notation}
\newtheorem*{notation*}{Notation}
\crefname{notation}{Notation}{Notation}
\newtheorem*{convention*}{Convention}
\crefname{convention}{Convention}{Convention}
\crefname{problem}{Problem}{Problems}
\newtheorem{question}[theorem]{Question}
\crefname{question}{Question}{Questions}
\crefname{condition}{Condition}{Conditions}
\crefname{assumption}{Assumption}{Assumptions}
\crefname{propGr}{Property}{Property}
\theoremstyle{remark}
\crefname{rmk}{Remark}{Remarks}
\newtheorem*{rmk*}{Remark}
\crefname{rmk}{Remark}{Remarks}
\newtheorem{remark}[theorem]{Remark}
\crefname{remark}{Remark}{Remarks}
\crefname{fact}{Fact}{Facts}
\crefname{claim}{Claim}{Claims}
\newtheorem*{claim*}{Claim}
\crefname{claim}{Claim}{Claims}
\crefname{step}{Step}{Steps}
\crefname{case}{Case}{Cases}
\numberwithin{equation}{section}
\newcommand{\what}[1]{\widehat{#1}}
\newcommand{\ol}[1]{\overline{#1}}
\newcommand{\longinjmap}{\lhook\joinrel\longrightarrow}
\newcommand{\lra}{\longrightarrow}
\newcommand{\bC}{\mathbf{C}}
\newcommand{\bF}{\mathbf{F}}
\newcommand{\bH}{\mathbf{H}}
\newcommand{\bN}{\mathbf{N}}
\newcommand{\bQ}{\mathbf{Q}}
\newcommand{\bR}{\mathbf{R}}
\newcommand{\bZ}{\mathbf{Z}}
\newcommand{\bbi}{\mathbbm{i}}
\newcommand{\bbj}{\mathbbm{j}}
\newcommand{\bbk}{\mathbbm{k}}
\newcommand{\bk}{\mathbf{k}}
\newcommand{\sF}{\mathscr{F}}
\newcommand{\sL}{\mathscr{L}}
\newcommand{\sO}{\mathscr{O}}
\newcommand{\sT}{\mathsf{T}}
\newcommand{\reduced}{{\rm red}}
\newcommand{\alg}{\operatorname{alg}}
\newcommand{\Char}{\operatorname{char}}
\newcommand{\Div}{\mathsf{Div}}
\newcommand{\End}{\operatorname{End}}
\newcommand{\et}{{\textrm{\'et}}}
\newcommand{\GK}{\operatorname{GKdim}}
\newcommand{\id}{\operatorname{id}}
\newcommand{\im}{\operatorname{Im}}
\newcommand{\isom}{\simeq}
\newcommand{\lov}{\operatorname{lov}}
\newcommand{\Mat}{\operatorname{M}}
\newcommand{\N}{\mathsf{N}}
\newcommand{\num}{\equiv}
\newcommand{\Pic}{\operatorname{Pic}}
\newcommand{\plov}{\operatorname{plov}}
\newcommand{\pr}{\operatorname{pr}}
\newcommand{\re}{\operatorname{Re}}
\newcommand{\Vol}{\operatorname{Vol}}
\let\@wraptoccontribs\wraptoccontribs
\begin{document}

\title[Polynomial volume growth for abelian varieties]{Polynomial volume growth of quasi-unipotent automorphisms of abelian varieties}

\author{Fei Hu}
\contrib[with an appendix in collaboration with]{Chen Jiang}

\address{
Department of Mathematics, Nanjing University, Nanjing, China
\endgraf
Department of Mathematics, University of Oslo, Oslo, Norway
\endgraf
Department of Mathematics, Harvard University, Cambridge, USA
}

\email{\href{mailto:fhu@nju.edu.cn}{\tt fhu@nju.edu.cn}}

\address{Shanghai Center for Mathematical Sciences \& School of Mathematical Sciences, Fudan University, Shanghai, China}
\email{\href{mailto:chenjiang@fudan.edu.cn}{chenjiang@fudan.edu.cn}}


\begin{abstract}
Let $X$ be an abelian variety over an algebraically closed field $\mathbf{k}$ and $f$ a quasi-unipotent automorphism of $X$.
When $\mathbf{k}$ is the field of complex numbers, Lin, Oguiso, and D.-Q. Zhang provide an explicit formula for the polynomial volume growth of (or equivalently, for the Gelfand--Kirillov dimension of the twisted homogeneous coordinate ring associated with) the pair $(X, f)$, by an analytic argument.
We give an algebraic proof of this formula that works in arbitrary characteristic.

In the course of the proof, we obtain:
(1) a new description of the action of endomorphisms on the $\ell$-adic Tate spaces, in comparison with recent results of Zarhin and Poonen--Rybakov;
(2) a partial converse to a result of Reichstein, Rogalski, and J.J. Zhang on quasi-unipotency of endomorphisms and their pullback action on the rational N\'eron--Severi space $\mathsf{N}^1(X)_{\mathbf{Q}}$ of $\mathbf{Q}$-divisors modulo numerical equivalence;
(3) the maximum size of Jordan blocks of (the Jordan canonical form of) $f^*|_{\mathsf{N}^1(X)_{\mathbf{Q}}}$ in terms of the action of $f$ on the Tate space $V_\ell(X)$.
\end{abstract}

\subjclass[2020]{
14G17,	
14K05,	
16K20.	
}

\keywords{positive characteristic, abelian variety, endomorphism, semisimple algebra, Tate module, polynomial volume growth, Gelfand--Kirillov dimension, quasi-unipotency, degree growth}

\date{}

\thanks{The author was supported by Young Research Talents grant 300814 from the Research Council of Norway and Start-up grant 14912209 from Nanjing University.}

\maketitle



\section{Introduction}
\label{sec:intro}


Algebraic dynamics with positive entropy has been intensively studied in the last decades (see Oguiso \cite{Oguiso14-ICM} and Dinh--Sibony \cite{DS17} for surveys).
Recently, there has also been a growing interest in dynamics with zero entropy.
We refer to Cantat \cite[\S 3]{Cantat18-ICM}, Cantat--Paris-Romaskevich \cite{CPR21}, Fan--Fu--Ouchi \cite{FFO21}, Dinh--Lin--Oguiso--Zhang \cite{DLOZ22}, and references therein for the state-of-the-art on this subject.

Given a surjective self-morphism $f$ of a smooth complex projective variety $X$ of dimension $d$, Gromov \cite{Gromov03} introduced in 1977 the so-called iterated graphs $\Gamma_n \subset X^{n}$ of $f$ and bounded the topological entropy $h_{\mathrm{top}}(f)$ of $f$ by the {\it volume growth $\lov(f)$} of $f$ and then by the {\it algebraic entropy $h_{\mathrm{alg}}(f)$} of $f$ as follows:
\begin{align*}
h_{\mathrm{top}}(f) &\leq \lov(f) \coloneqq \limsup_{n\to \infty} \frac{\log \Vol (\Gamma_n)}{n} \\
&\leq h_{\alg}(f) \coloneqq \log \max_{0\leq k\leq d} \lambda_k(f),
\end{align*}
where the volume of $\Gamma_n$ is computed against the ample divisor on the product variety $X^{n}$ induced from an arbitrary ample divisor $H_X$ on $X$ and the $k$-th dynamical degree $\lambda_k(f)$ of $f$ is defined as the spectral radius of the pullback action $f^*|_{H^{k,k}(X)}$ of $f$ on the Dolbeault cohomology group $H^{k,k}(X)$.
This, in conjunction with Yomdin's remarkable inequality $h_{\mathrm{top}}(f) \geq h_{\mathrm{alg}}(f)$ (which resolves Shub's entropy conjecture; see \cite{Yomdin87}), yields the fundamental equality in higher-dimensional algebraic/holomorphic dynamics, saying that
\[
h_{\mathrm{top}}(f) = \lov(f) = h_{\mathrm{alg}}(f).
\]

\subsection{Polynomial volume growth of automorphisms of zero entropy}
\label{subsec:LOZ}

Given an algebraic dynamical system $(X, f)$ with zero entropy,
equivalently, the pullback action $f^*|_{\N^1(X)_\bQ}$ of $f$ on the rational N\'eron--Severi space $\N^1(X)_\bQ \coloneqq \Div(X)_\bQ/\!\num$ of numerical classes of $\bQ$-divisors is {\it quasi-unipotent} (i.e., all of its eigenvalues are roots of unity; see \cref{lemma:zero-entropy}),
there is no exponential growth of the volumes of iterated graphs.
However, one may consider polynomial growth.
Indeed, Cantat and Paris-Romaskevich \cite{CPR21} introduced the so-called polynomial volume growth $\plov(f)$ of $f$ as follows:
\[
\plov(f) \coloneqq \limsup_{n\to \infty} \frac{\log \Vol (\Gamma_n)}{\log n},
\]
which is also well defined in arbitrary characteristic (see \S\ref{sec:plov} for details).
In \cite{LOZ}, Lin, Oguiso, and Zhang found that for an automorphism $f$ of zero entropy of $X$, the invariant $\plov(f)$ turns out to be equivalent to the Gelfand--Kirillov dimension $\GK B$ of the twisted homogeneous coordinate ring $B$ associated with $(X, f)$, as a direct consequence of Keeler's earlier work \cite{Keeler00} on $f$-ample\footnote{In the literature \cite{AVdB90,Keeler00,RRZ06}, it was named as $\sigma$-ampleness where $\sigma$ is an automorphism of $X$. We prefer to use $f$ to denote an automorphism for consistency and hope this will not cause any confusion.} divisors.
Among other things, they proved an explicit formula computing $\plov(f)$ for complex tori (see \cite[Theorem~1.4]{LOZ}).

Note that $\plov(f)$ and $\GK B$ are both defined algebraically via intersection theory, so are still meaningful in positive characteristic.
Inspired by earlier related works \cite{Hu19,Hu-lc}, we are thus wondering if their formula holds for abelian varieties in positive characteristic.
Our first result gives an affirmative answer to this question.
Let us fix some notations.

From now on till the end of this article, we will be working over an algebraically closed field $\bk$ of arbitrary characteristic.
We refer to Esnault--Srinivas\cite{ES13} for the famous pioneering work on algebraic dynamics in positive characteristic.
Let $X$ be an abelian variety of dimension $g$ over $\bk$.
Let $\End(X)$ be the endomorphism ring of $X$.
Denote by
\[
\End^\circ(X) \coloneqq \End(X)\otimes_\bZ \bQ
\]
the semisimple $\bQ$-algebra of endomorphisms (with rational coefficients) of $X$.
Let $\ell$ be a prime different from $\Char(\bk)$ and $T_\ell(X) \coloneqq \varprojlim_{n} X[\ell^n](\bk)$ be the $\ell$-adic Tate module of $X$, which is a free $\bZ_\ell$-module of rank $2g$.
We call the corresponding $\bQ_\ell$-vector space $V_\ell(X) \coloneqq T_\ell(X) \otimes_{\bZ_\ell} \bQ_\ell$ the $\ell$-adic Tate space of $X$.
This gives us the following faithful $\ell$-adic representation
\begin{equation*}
V_\ell \colon \End^\circ(X) \otimes_\bQ \bQ_\ell \longinjmap \End_{\bQ_\ell}(V_\ell(X)) = \Mat_{2g}(\bQ_\ell).
\end{equation*}

\begin{theorem}
\label{thm:B}
Let $f$ be an automorphism of an abelian variety $X$ of dimension $g$ over $\bk$.
Suppose that $f$ is of zero entropy (see \cref{lemma:zero-entropy}).
Denote the Jordan canonical form of $V_\ell(f)$ by $J \oplus \ol{J}$, where $J = \bigoplus_i J_{\zeta_i,k_i} \in \Mat_{g}(\ol\bQ)$ for some roots of unity $\zeta_i\in \ol\bQ$ and positive integers $k_i\in\bZ_{>0}$.
Then the polynomial volume growth $\plov(f)$ of $f$ is equal to $\sum_i k_i^2$.
\end{theorem}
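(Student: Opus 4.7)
The plan is to reduce the computation of $\plov(f)$ to a Hilbert-polynomial degree computation on $X$, and to identify that degree via the Jordan block sizes of $V_\ell(f)$.

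\emph{Step 1 (Reduction).} Since $V_\ell(f)$ is quasi-unipotent, after replacing $f$ by a suitable power $f^N$ we may assume that $V_\ell(f)$ is unipotent; this preserves the Jordan block sizes $k_i$ (eigenvalues change from $\zeta_i$ to $\zeta_i^N=1$ but block sizes are unchanged) and also $\plov(f)$ (a standard property of $\plov$ under taking powers). By the Lin--Oguiso--Zhang identification $\plov(f)=\GK B$ (built on Keeler's theorem for $f$-ample divisors), $\plov(f)$ equals the degree in $n$ of the Hilbert polynomial $P(n)=\chi(A_n)$, where $A_n=L+f^*L+\cdots+(f^{n-1})^*L$ for any ample divisor $L$. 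By Riemann--Roch on the abelian variety $X$ of dimension $g$, $\chi(A_n)=A_n^g/g!$, so $\plov(f)=\deg_n A_n^g$ (a polynomial in $n$ since $\N^g(X)_\bQ\cong\bQ$).

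\emph{Step 2 (Jordan structure of $f^*|_{\N^1(X)_\bQ}$).} Fix a polarization $\phi_0\colon X\to X^\vee$ and identify $\N^1(X)_\bQ$ with the Rosati-symmetric subspace of $\End^\circ(X)$; under this identification, $f^*$ acts by $\alpha\mapsto f'\alpha f$, where $f'=\phi_0^{-1}f^\vee\phi_0$ is the Rosati involute of $f$. Passing to the Tate space via the faithful $\ell$-adic representation $V_\ell$, this becomes a conjugation-type action on $\End(V_\ell(X))$ restricted to its Rosati-symmetric part. Using the decomposition $V_\ell(f)=J\oplus\bar J$ with $J=\bigoplus_i J_{\zeta_i,k_i}$, together with the tensor-product decompositions of Jordan blocks $J_{\zeta_i,k_i}\otimes J_{\bar\zeta_j,k_j}$ (producing sub-blocks of sizes $k_i+k_j-1,k_i+k_j-3,\ldots,|k_i-k_j|+1$), one reads off the Jordan structure of $f^*|_{\N^1(X)_\bQ}$ from that of $V_\ell(f)$.

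\emph{Step 3 (Degree computation).} Expand $A_n$ in a Jordan basis $\{e_{i,j}\}$ of $f^*|_{\N^1(X)_\bQ}$. Via the identity $\sum_{k=0}^{n-1}\binom{k}{m}=\binom{n}{m+1}$, each $e_{i,j}$-coefficient of $A_n$ is a binomial polynomial in $n$ of known degree. Thus $A_n^g$ is a polynomial in $n$ whose degree equals the maximum total $n$-power among non-vanishing $g$-fold intersection products of the Jordan basis elements. Using Riemann--Roch in the form $\chi(L)^2=\deg\phi_L$, intersection numbers on $X$ translate into a norm/trace computation on $\End^\circ(X)$; combined with Step 2, this evaluates $\deg_n A_n^g$ to $\sum_i k_i^2$.

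The main obstacle is Step 3: showing that $\deg_n A_n^g$ equals exactly $\sum_i k_i^2$. Top-level self-intersections $v_{\mathrm{top}}^g$ of chain-top Jordan basis elements typically vanish, as already seen in the toy case $X=E\times E$, $f(x,y)=(x+y,y)$, where $\plov(f)=4=k_1^2$ is realized only through cross-term intersections of Jordan basis elements. Showing that the combinatorial cancellation happens precisely at the level $\sum_i k_i^2$ (neither more nor less) requires careful bookkeeping, ultimately reducing to an algebraic identity for the trace form on $\End^\circ(X)$ applied to the unipotent element $V_\ell(f)-\mathrm{Id}$; the ampleness of $L$ provides the non-degeneracy needed for the lower bound.
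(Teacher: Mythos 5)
Your Step 1 is exactly the paper's reduction: pass to an iterate so that $V_\ell(f)$ is unipotent, invoke the Keeler/Lin--Oguiso--Zhang identification, and use Riemann--Roch to turn $\plov(f)$ into $\deg_n(\Delta_n^g)$. The problem is that Steps 2--3 then head into precisely the difficulty that this theorem is supposed to overcome, and you do not resolve it. Expanding $\Delta_n$ in a Jordan basis of $f^*|_{\N^1(X)_\bQ}$ reduces the degree computation to deciding which $g$-fold intersection products of Jordan basis elements (equivalently, which numbers $N^{i_1}H_X\cdots N^{i_g}H_X$) are nonzero; the paper explicitly flags this vanishing/nonvanishing question as nontrivial and open in general (\cref{rmk:plov}), and your own text concedes it is ``the main obstacle'' requiring ``careful bookkeeping'' reducing to an unspecified ``algebraic identity for the trace form.'' No such identity is stated or proved, and the claim that ampleness supplies the needed non-degeneracy for the lower bound is an assertion, not an argument. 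As written, the proposal establishes neither the upper nor the lower bound for $\deg_n(\Delta_n^g)$.

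The missing idea is to avoid the intersection-theoretic expansion altogether by using the second half of Riemann--Roch, $\chi(\sL)^2=\deg(\phi_{\sL})$, together with the identity $\phi_{(f^m)^*\sL}=\phi_{\sL}\circ(f^m)^\dagger\circ f^m$. Summing over $m$ and taking degrees gives
\begin{equation*}
\deg(\phi_{\sO_X(\Delta_n)})=\deg(\phi_{\sL})\cdot\det\Bigg(\sum_{m=0}^{n-1}V_\ell(f^m)^{*}\,V_\ell(f^m)\Bigg),
\end{equation*}
since the Rosati involution corresponds to conjugate transpose under a suitable identification. This converts the whole problem into computing the degree in $n$ of $\det\big(\sum_{m=0}^{n-1}(J^m)^{*}J^m\big)$ for a unipotent $J=\bigoplus_i J_{1,k_i}$ — a pure linear-algebra statement. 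That degree is shown (by proving it is invariant under replacing the Hermitian form $\mathrm{I}$ by any positive definite $H$ and under similarity of $J$, and then evaluating a Hilbert-matrix determinant) to equal $\sum_i k_i^2$ per copy of $J$, giving $2\sum_i k_i^2$ for $J\oplus\ol J$ and hence $\plov(f)=\sum_i k_i^2$ after taking the square root coming from $\chi(\sL)^2=\deg(\phi_\sL)$. You cite $\chi(L)^2=\deg\phi_L$ in passing but never exploit it in this way, so the decisive computation is absent from your proposal.
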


One may wonder in \cref{thm:B} why the Jordan canonical form of $V_\ell(f)$ has to be of the form $J \oplus \ol{J}$ for $J\in \Mat_{g}(\ol\bQ)$, since, a priori, $V_\ell(f)$ lies in $\Mat_{2g}(\bQ_\ell)$.
This is actually our next main \cref{thm:A} on the induced action of endomorphisms on the $\ell$-adic Tate spaces.

\begin{remark}
As mentioned earlier, \cref{thm:B} has been proved by Lin, Oguiso, and Zhang for complex tori (see \cite[Theorem~1.4]{LOZ}).
Their proof is analytic and relies on calculations of differential forms; in particular, certain positivity of $(1,1)$-forms is involved.
But so far, we are not aware of any such positivity in \'etale cohomology.
Secondly, following \cite{AVdB90,Keeler00}, they also decompose the unipotent action $f^*|_{\N^1(X)_\bQ}$ as $\id + N$ for some nilpotent operator $N$ on $\N^1(X)_\bQ$ satisfying that $N^{k(f)}\neq 0$ and $N^{k(f)+1}=0$, i.e., $k(f)+1$ is the maximum size of Jordan blocks of (the Jordan canonical form of) $f^*|_{\N^1(X)_\bQ}$.
We shall discuss $k(f)$ in more details in \S\ref{subsec:DLOZ} later.
The advantage of this decomposition is that for {\it any} $n$, the volume $\Vol (\Gamma_n)$ of the iterated graph $\Gamma_n\subset X^n$ becomes the top self-intersection of a divisor
\[
\Delta_n\coloneqq \Delta_n(f, H_X) \coloneqq \sum_{m=0}^{n-1} (f^m)^*H_X = \sum_{i=0}^{k(f)} \binom{n}{i+1} N^iH_X
\]
on $X$, which is only a finite sum of divisors of the form $N^iH_X$.
However, the nilpotent operator $N$ is not geometric in nature so that the (non)vanishing of intersections of divisors of the form $N^iH_X$ does not seem to be easy to determine (see \cref{rmk:plov}).
\end{remark}

\begin{remark}
Our proof of \cref{thm:B} is algebraic and characteristic-free, though the idea is simple.
We interpret the top self-intersection number of $\Delta_n$ as the degree of the endomorphism $\phi_{\sO_X(\Delta_n)}$ induced by the line bundle $\sO_X(\Delta_n)$ on $X$ using the Riemann--Roch theorem for abelian varieties.
Then based on previous works of Keeler \cite{Keeler00} and Lin--Oguiso--Zhang \cite{LOZ}, it suffices to evaluate the degree of the degree function $\deg(\phi_{\sO_X(\Delta_n)})$ as a polynomial in $n$, which can be done by an explicit calculation with the aid of \cref{thm:A,thm:C,prop:A*A}.
\end{remark}

\subsection{Pseudo-analytic representation of endomorphism algebras}
\label{subsec:ZPR}

As mentioned earlier, the main motivation of this subsection is its application to the polynomial volume growth for abelian varieties in arbitrary characteristic discussed in \S\ref{subsec:LOZ}.
Besides, another one is the following very recent results on certain rational or integral structure of the induced $\ell$-adic representation of $\End^\circ(X)$ on the Tate modules/spaces.

\begin{theorem}[{Zarhin \cite[Theorem~1.1]{Zarhin20} and Poonen--Rybakov \cite[Theorem~1.2 (b)]{PR21}}]
\label{thm:ZPR}
Let $X$ be an abelian variety of dimension $g$ over $\bk$.
\begin{enumerate}[label=\emph{(\arabic*)}, ref=\arabic*]
\item For any $\alpha\in \End^\circ(X)$, there is a matrix $M \in \Mat_{2g}(\bQ)$ such that there is a $\bQ_\ell$-basis of $V_\ell(X)$ on which the induced action of $\alpha$ is represented by $M$.
\item For any $\alpha\in \End(X)$, there is a matrix $M \in \Mat_{2g}(\bZ)$ such that there is a $\bZ_\ell$-basis of $T_\ell(X)$ on which the induced action of $\alpha$ is represented by $M$.
\end{enumerate}
\end{theorem}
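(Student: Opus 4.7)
Both parts assert a rationality (resp.\ integrality) of the $\ell$-adic action of $\End^\circ(X)$ (resp.\ $\End(X)$) when restricted to a single endomorphism. The plan is to prove (1) via Poincar\'e complete reducibility together with the structure theory of the simple components of $\End^\circ(X)$, and then to upgrade to (2) by a lattice-tracking argument.

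For (1), fix $\alpha\in\End^\circ(X)$; the claim is equivalent to the assertion that all $\bQ_\ell[T]$-invariant factors of $V_\ell(\alpha)$ acting on $V_\ell(X)$ lie in $\bQ[T]$. By Poincar\'e reducibility, $X\sim\prod_i X_i^{n_i}$ with pairwise non-isogenous simple $X_i$, inducing $\End^\circ(X)\cong\prod_i\Mat_{n_i}(D_i)$ with $D_i=\End^\circ(X_i)$ a division $\bQ$-algebra, and $V_\ell(X)\cong\bigoplus_i V_\ell(X_i)^{\oplus n_i}$ as $\End^\circ(X)\otimes\bQ_\ell$-modules, so one reduces to a single simple factor $Y$. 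For such a $Y$ with $D=\End^\circ(Y)$, center $F=Z(D)$, $e^2=[D:F]$, and $d=[F:\bQ]$, the key structural input is the decomposition
\[
V_\ell(Y)\otimes_{\bQ_\ell}\bar\bQ_\ell \;\cong\; \bigoplus_{\sigma\colon F\hookrightarrow\bar\bQ_\ell}(\bar\bQ_\ell^e)^{\oplus m}, \qquad m=\tfrac{2\dim(Y)}{ed},
\]
with \emph{uniform} multiplicity $m$ across embeddings $\sigma$, where $\alpha\in D$ acts on the $\sigma$-summand through the splitting $D\otimes_{F,\sigma}\bar\bQ_\ell\cong\Mat_e(\bar\bQ_\ell)$. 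The uniformity of $m$ forces the Jordan partition of $V_\ell(\alpha)$ at any eigenvalue $\lambda\in\bar\bQ$ to depend only on the $\Gal(\bar\bQ/\bQ)$-orbit of $\lambda$, so the invariant factors of $V_\ell(\alpha)$ descend from $\bQ_\ell[T]$ to $\bQ[T]$, and $M\in\Mat_{2g}(\bQ)$ is furnished by the rational canonical form of these invariant factors.

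For (2), given $\alpha\in\End(X)$, the invariant factors of $V_\ell(\alpha)$ now lie in $\bQ[T]$ by (1) and divide the characteristic polynomial of $V_\ell(\alpha)$, which lies in $\bZ[T]$; by Gauss's lemma they are in $\bZ[T]$. Take $M\in\Mat_{2g}(\bZ)$ to be the integer rational canonical form with these invariant factors. The remaining task is to produce a $\bZ_\ell$-basis of $T_\ell(X)$ realizing the $\alpha$-action as $M$, i.e.\ to compare the $\bZ_\ell[T]$-lattice structures on $T_\ell(X)$ and on $\bZ_\ell^{2g}$ (carrying $M$) inside the ambient $\bQ_\ell[T]$-module. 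The main obstacle lies here: a priori, distinct $\bZ_\ell$-lattices in the same $\bQ_\ell[T]$-module need not be $\bZ_\ell[T]$-isomorphic, and the content of Poonen--Rybakov's strengthening of Zarhin is that the Tate module $T_\ell(X)$, thanks to its origin in torsion points of $X$, belongs to the correct integral similarity class---namely, it is $\bZ_\ell[T]$-isomorphic to $\bigoplus_i\bZ_\ell[T]/(d_i)$ for the invariant factors $d_i\in\bZ[T]$ above. Resolving this step requires an arithmetic analysis of the $\bZ_\ell[T]$-module structure on $T_\ell(X)$ which, in positive characteristic (where no singular-cohomology $\bZ$-form $H_1(X,\bZ)$ is available), demands the more delicate constructions of Zarhin and Poonen--Rybakov rather than the classical Hodge-theoretic argument available over $\bC$.
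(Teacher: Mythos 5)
A preliminary remark: the paper does not prove \cref{thm:ZPR} at all --- it is quoted from Zarhin and Poonen--Rybakov, with only the side comment that in characteristic zero one reduces to $\bk=\bC$ and takes integral homology --- so there is no internal argument to compare yours with. On its own terms, your part (1) is a correct outline: Poincar\'e reducibility, the decomposition of $V_\ell(X)\otimes_{\bQ_\ell}\ol{\bQ_\ell}$ into the irreducible representations of $\End^\circ(X)\otimes\ol{\bQ_\ell}$ with \emph{uniform} multiplicity (this is exactly the paper's \cref{prop:Milne}, itself borrowed from Milne and nontrivial in positive characteristic), and the observation that $\Gal(\ol{\bQ}/\bQ)$ permutes those irreducibles transitively over each simple factor, so the Jordan partition of $V_\ell(\alpha)$ is constant on Galois orbits of eigenvalues. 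That does put the invariant factors in $\bQ[T]$, and the rational canonical form then furnishes $M\in\Mat_{2g}(\bQ)$.

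Part (2) contains the genuine gap, and you have effectively conceded it: passing from a $\bQ_\ell$-basis of $V_\ell(X)$ to a $\bZ_\ell$-basis of $T_\ell(X)$ is the entire content of the integral refinement, and your write-up defers precisely that step to Poonen--Rybakov instead of proving it. Moreover, the statement you attribute to them --- that $T_\ell(X)$ is $\bZ_\ell[T]$-isomorphic to $\bigoplus_i\bZ_\ell[T]/(d_i)$ for the invariant factors $d_i$ --- is strictly stronger than their theorem and is false in general. Already for an elliptic curve with CM by the maximal order $\cO_K$ and $\alpha$ generating a suborder $\bZ[\alpha]\subset\cO_K$ whose conductor is divisible by $\ell$, one has $T_\ell(X)\cong\cO_K\otimes\bZ_\ell$ as a $\bZ_\ell[\alpha]$-module, which is not isomorphic to $\bZ_\ell[T]/(d_1)\cong\bZ[\alpha]\otimes\bZ_\ell$: these are non-homothetic orders sitting in the same $\bQ_\ell[\alpha]$-module. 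What Poonen--Rybakov actually prove is only that the $\bZ_\ell[T]$-module $T_\ell(X)$ descends to $\bZ$, i.e., that $\alpha$ is represented by \emph{some} integer matrix, not by the companion form of the invariant factors. So part (1) can stand as a proof sketch, but part (2) is not proved.
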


Clearly, the characteristic zero case of \cref{thm:ZPR} can be reduced to the complex case, and then one just takes rational or integral homology (see \cite[Remark~1.2]{Zarhin20}).
Moreover, for a smooth complex projective variety $Y$, its cohomology groups $H^*(Y, \bZ)$ carry Hodge structure; in particular, for any self-morphism $f$ of $Y$, the induced action $f^*|_{H^{1}(Y, \bC)}$ has the following decomposition:
\begin{align*}
f^*|_{H^{1}(Y, \bC)} = f^*|_{H^{1,0}(Y)} \oplus f^*|_{H^{0,1}(Y)} = f^*|_{H^{1,0}(Y)} \oplus \ol{f^*|_{H^{1,0}(Y)}}.
\end{align*}
This forces all eigenvalues of $f^*|_{H^{1}(Y, \bC)}$ have to appear in pairs.
In an earlier work on the dynamical degree comparison conjecture, the author proved a positive characteristic analog of this parity phenomena for abelian varieties (see \cite[Theorem~1.1]{Hu-lc}).

Inspired by Zarhin, Poonen, and Rybakov's recent works on endomorphisms of abelian varieties, we now slightly strengthen our previous result to the following form, which describes a new property of the $\ell$-adic representation $V_\ell$ of endomorphism algebras.

\begin{theorem}
\label{thm:A}
Let $X$ be an abelian variety of dimension $g$ over $\bk$ and $\alpha\in \End^\circ(X)$.
Then the Jordan canonical form of $V_\ell(\alpha)$ is of the form $J\oplus \ol{J}$, where $J\in \Mat_g(\ol\bQ)$ is a direct sum of some standard Jordan blocks and $\ol{J}$ is its complex conjugate.
\end{theorem}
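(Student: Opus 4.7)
The plan is to deduce the theorem from the structure of $V_\ell(X)$ as a module over the semisimple $\bQ_\ell$-algebra $\End^\circ(X) \otimes \bQ_\ell$, combined with Albert's classification of endomorphism algebras and the Rosati involution attached to a polarization, thereby upgrading the eigenvalue-parity result \cite[Theorem~1.1]{Hu-lc} to the level of Jordan blocks. After multiplying $\alpha$ by a positive integer, we may assume $\alpha \in \End(X)$. Poincar\'e complete reducibility yields an isogeny $X \sim \prod_i Y_i^{n_i}$ with $Y_i$ simple and pairwise non-isogenous, whence $\End^\circ(X) \cong \prod_i \Mat_{n_i}(D_i)$ for $D_i = \End^\circ(Y_i)$; since $V_\ell$ is isogeny-invariant and $\Hom(Y_i, Y_j) = 0$ for $i \neq j$, the element $\alpha$ preserves the splitting $V_\ell(X) = \bigoplus_i V_\ell(Y_i)^{n_i}$. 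As the asserted identity is additive over direct sums, it suffices to treat the isotypic case $X = Y^n$ with $Y$ simple.

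For the isotypic case, set $D = \End^\circ(Y)$; then $\alpha \in \Mat_n(D)$ has a rational canonical form over $D$ whose invariants determine the Jordan form of $V_\ell(\alpha)$ on $V_\ell(Y)^n$. We are thus reduced to analysing, for a single $\beta \in D$, the Jordan form of $V_\ell(\beta)$ on $V_\ell(Y)$ and showing it has shape $J_\beta \oplus \ol{J_\beta}$ with $\dim J_\beta = g_Y = \dim Y$. Extending scalars to $\ol{\bQ}_\ell$, the simple algebra $D \otimes \ol{\bQ}_\ell$ splits as $\prod_\sigma \Mat_{d}(\ol{\bQ}_\ell)$ indexed by embeddings $\sigma \colon Z(D) \hookrightarrow \ol{\bQ}_\ell$, while $V_\ell(Y) \otimes \ol{\bQ}_\ell$ decomposes as $\bigoplus_\sigma (\ol{\bQ}_\ell^{d})^{\oplus m_\sigma}$ with integer multiplicities $m_\sigma$, and the Jordan form of $V_\ell(\beta)$ is determined by the $m_\sigma$ together with the Jordan forms of the images $\beta_\sigma \in \Mat_d(\ol{\bQ}_\ell)$.

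The key structural input is the Rosati involution $\dagger$ on $D$ attached to a polarization of $Y$. The adjointness $V_\ell(\alpha^\dagger) = V_\ell(\alpha)^{\mathrm{ad}}$ with respect to the Weil pairing shows $V_\ell(\alpha^\dagger)$ and $V_\ell(\alpha)$ share the same Jordan form, whereas $\dagger|_{Z(D)}$ is either trivial (Albert Types I--III, where $Z(D)$ is totally real) or the unique non-trivial involution of the CM field $Z(D)$ (Type IV). In the Type IV case this permutes the embeddings $\sigma$ in pairs that match complex conjugation of eigenvalues; in the totally-real cases the eigenvalues of $V_\ell(\beta)$ are fixed by conjugation and the required symmetry reduces to the evenness of each $m_\sigma$, which follows from the integrality constraints in Albert's classification (for instance, in Type~I one has $V_\ell(Y)$ free of rank $2g_Y/e$ over $Z(D) \otimes \bQ_\ell$). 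In each of the four types a case-by-case inspection yields the decomposition $J_\beta \oplus \ol{J_\beta}$ with $\dim J_\beta = g_Y$.

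The main obstacle is to justify the arithmetic constraints on the $m_\sigma$ and the block structure over $\ol{\bQ}_\ell$ in positive characteristic, where Hodge theory is unavailable. This is precisely where the rational-matrix representation of $V_\ell(\alpha)$ furnished by Zarhin and Poonen--Rybakov (\cref{thm:ZPR}) becomes essential: it transports Jordan-form questions into statements about matrices in $\Mat_{2g}(\bQ)$, on which $\dagger$-induced symmetries become purely algebraic and the $\ell$-adic Jordan form is forced to reflect a global, characteristic-free structure. Combined with the characteristic-free Albert classification of $D$ and the symplecticity of the Weil pairing, this should produce the desired decomposition $J \oplus \ol{J}$ with $\dim J = g$, completing the proof.
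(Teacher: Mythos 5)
Your opening reduction (Poincar\'e reducibility, isotypy, Albert's classification, the splitting of $\End^\circ(X)\otimes\ol{\bQ}_\ell$ into matrix blocks indexed by embeddings of the center) matches the paper's strategy, but the next step is a genuine gap that breaks the argument. You claim that the rational canonical form of $\alpha\in\Mat_n(D)$ reduces the problem to ``a single $\beta\in D$'' acting on $V_\ell(Y)$. This cannot work: any single element $\beta$ of the division algebra $D$ generates a commutative \emph{field} $\bQ(\beta)\subset D$, so $V_\ell(\beta)$ is always diagonalizable. The nontrivial Jordan blocks --- which are the entire content of the theorem, and the input needed for \cref{thm:B,thm:D} --- arise only from non-semisimple elements of $\Mat_n(D)$ with $n>1$, and these are not captured by elements of $D$ (nor by companion matrices of polynomials over the noncommutative ring $D[t]$ in any way you have justified). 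The paper avoids this by never leaving $\Mat_n(D)$: it shows that under $\psi\colon\Mat_n(D)\otimes_\bQ\ol\bQ\xrightarrow{\sim}\bigoplus_i\Mat_{dn}(\ol\bQ)$ the image of $\alpha$ lands in a subalgebra with a rigid shape (real matrices for Types I--II, matrices of the quaternionic form $\bigl(\begin{smallmatrix} M_1 & M_2\\ -\ol{M_2} & \ol{M_1}\end{smallmatrix}\bigr)$ for Type III, pairs $M\oplus\ol M$ for Type IV), and that shape alone forces the pseudo-analytic decomposition of the \emph{whole} matrix, Jordan blocks included.

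A second, independent gap is your treatment of the totally-real-center cases. You assert that for Types I--III the symmetry ``reduces to the evenness of each $m_\sigma$.'' This is false for Type III: take $Y$ a supersingular elliptic curve over $\ol{\bF_p}$, so $D$ is the quaternion algebra ramified at $p$ and $\infty$ with $e=1$, $d=2$, $n=1$, and the multiplicity is $m=2g/(edn)=1$, which is odd. The pairing of Jordan blocks there does not come from multiplicity at all but from the quaternionic structure of $D\otimes_K\ol\bQ\cong\Mat_2(\ol\bQ)$ (the paper cites \cite[Theorem~5.7.1]{Rodman14} for the fact that such matrices have Jordan form $J\oplus\ol J$). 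Evenness of $m$ is genuinely used only for Types I and II. Finally, your last paragraph defers the actual conclusion to \cref{thm:ZPR} and ``should produce the desired decomposition''; note the paper emphasizes that \cref{thm:A} is logically independent of Zarhin and Poonen--Rybakov --- the rationality input it needs is only Milne's \cref{prop:Milne} identifying $V_\ell(X)\otimes\ol{\bQ}_\ell$ with copies of the reduced representation --- so invoking \cref{thm:ZPR} neither fills the two gaps above nor supplies the missing case analysis.
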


We thus introduce the following notion.

\begin{definition}
\label{def:pseudo-analytic}
Let $F$ be a field with an algebraic closure $\ol{F} \subseteq \bC$.
A matrix $M \in \Mat_{2n}(F)$ admits a {\it pseudo-analytic decomposition}, if $M$ is similar to $J \oplus \ol{J}$, where $J\in \Mat_n(\ol{F})$ is a direct sum of some standard Jordan blocks and $\ol{J}$ is its complex conjugate.
\end{definition}

\begin{remark}
Our \cref{thm:A} is independent of Zarhin \cite[Theorem~1.1]{Zarhin20} or Poonen--Rybakov \cite[Theorem~1.2 (b)]{PR21}, though it seems that we need to apply their results first so that it is possible to speak of a pseudo-analytic decomposition of $V_\ell(\alpha)$.
In fact, by a result of Milne (cf.~\cref{prop:Milne}), we see that $V_\ell(\alpha)$ is actually defined over $\ol\bQ$.
Based on a similar idea in the proof of \cite[Theorem~1.1]{Hu-lc}, we reduce the problem to the study of the (reduced) representation of semisimple $\bQ$-algebras of endomorphisms of abelian varieties.
On the other hand, our result does not seem to imply theirs either.
\end{remark}

\subsection{Quasi-unipotent action of endomorphisms of abelian varieties}
\label{subsec:RRZ}

Let $\alpha\in \End(X)$ be an endomorphism of an abelian variety $X$ of dimension $g$ over $\bk$.
Besides the induced action $V_\ell(\alpha)$ of $\alpha$ on the Tate space $V_\ell(X)$ discussed in \S\ref{subsec:ZPR}, it is also natural to consider its pullback action $\alpha^*|_{\N^1(X)_\bQ}$ on the rational N\'eron--Severi space $\N^1(X)_\bQ$ of $\bQ$-divisors modulo numerical equivalence:
\[
\begin{array}{cccc}
\nu\colon \End(X) & \lra & \End_\bQ(\N^1(X)_\bQ) \isom \Mat_{\rho}(\bQ) \\[2pt]
\alpha & \longmapsto & \nu(\alpha) \coloneqq \alpha^*|_{\N^1(X)_\bQ},
\end{array}
\]
where $\rho\coloneqq \dim_\bQ \N^1(X)_\bQ$ denotes the Picard number of $X$.
One can naturally extend $\nu$ to $\End^\circ(X) \to \End_\bQ(\N^1(X)_\bQ)$ by sending $\alpha\in\End^\circ(X)$ to $n^{-2}(n\alpha)^*|_{\N^1(X)_\bQ}$, if $n\alpha\in \End(X)$ for some $n\in \bZ_{>0}$.
We still denote it by $\nu$.
Note that $\nu$ is merely a homomorphism of multiplicative semigroups (or rather, monoids), but {\it not} a ring homomorphism (i.e., $\nu(\alpha + \beta) \neq \nu(\alpha) + \nu(\beta)$, in general).
Nonetheless, in their study of projective simplicity of the twisted homogeneous coordinate rings, Reichstein, Rogalski, and Zhang proved that $\nu$ preserves (quasi)-unipotency\footnote{Recall that for a ring $R$ with identity, an element $r\in R$ is {\it unipotent} if $r-1$ is nilpotent and {\it quasi-unipotent} if some power of $r$ is unipotent.}.
Precisely, if $\alpha \in \End^\circ(X)$ is (quasi-)unipotent, then so is $\nu(\alpha) \in \Mat_{\rho}(\bQ)$; see \cite[Theorem~8.5 (a)]{RRZ06}.
We obtain a partial converse to this as follows.


\begin{theorem}
\label{thm:C}
With notation as above, let $\alpha \in \End(X)$.
Then the following statements are equivalent.
\begin{enumerate}[label=\emph{(\arabic*)}, ref=\arabic*]
\item \label{thm:C-1} $\alpha \in \End(X)$ is quasi-unipotent.
\item \label{thm:C-2} $\nu(\alpha) = \alpha^*|_{\N^1(X)_\bQ} \in \Mat_{\rho}(\bQ)$ is quasi-unipotent.
\item \label{thm:C-3} $\alpha^*|_{H^2_{\emph{\et}}(X, \bQ_\ell)} \in \Mat_{b_2}(\bQ_\ell)$ is quasi-unipotent.
\item \label{thm:C-4} $V_\ell(\alpha) = \alpha|_{V_\ell(X)} \in \Mat_{2g}(\bQ_\ell)$ is quasi-unipotent.
\end{enumerate}
\end{theorem}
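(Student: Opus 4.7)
The plan is to close the circle of implications $(1) \Leftrightarrow (4) \Leftrightarrow (3) \Rightarrow (2) \Rightarrow (1)$. The equivalence $(1) \Leftrightarrow (4)$ is immediate from the faithfulness of the $\ell$-adic representation $V_\ell \colon \End^\circ(X) \otimes \bQ_\ell \injmap \End_{\bQ_\ell}(V_\ell(X))$, since quasi-unipotency is detected by the characteristic polynomial of the image. For $(4) \Leftrightarrow (3)$, I would use the canonical identification $H^2_\et(X, \bQ_\ell) \cong \wedge^2 V_\ell(X)^\vee$, under which the eigenvalues of $\alpha^*|_{H^2_\et}$ are the pairwise products $\lambda_i \lambda_j$ of eigenvalues of $V_\ell(\alpha)$. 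The direction $(4) \Rightarrow (3)$ is then immediate. For $(3) \Rightarrow (4)$, I would invoke \cref{thm:A}: the eigenvalues of $V_\ell(\alpha)$ occur in complex-conjugate pairs $(\lambda, \bar\lambda)$, so each $|\lambda|^2 = \lambda \bar\lambda$ appears as an eigenvalue of $\alpha^*$ on $H^2_\et(X, \bQ_\ell)$. If $(3)$ holds, $|\lambda|^2$ is a positive real root of unity and hence equals $1$, so $|\lambda|=1$. Since the characteristic polynomial of $V_\ell(\alpha)$ has integer coefficients, every Galois conjugate of $\lambda$ is again an eigenvalue, also of absolute value $1$; Kronecker's theorem then forces $\lambda$ to be a root of unity.

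The implication $(3) \Rightarrow (2)$ follows because the cycle class map $\N^1(X)_\bQ \otimes \bQ_\ell \injmap H^2_\et(X, \bQ_\ell)$ is $\alpha^*$-equivariant and injective, so the restriction of a quasi-unipotent operator to an invariant subspace remains quasi-unipotent. The heart of the theorem is therefore $(2) \Rightarrow (1)$, a partial converse to \cite[Theorem~8.5(a)]{RRZ06}. Fix an ample polarization $L$ on $X$. First, I would argue that $\alpha$ must be an automorphism: if $\alpha$ fails to be surjective, then $\alpha^*$ on $\N^1(X)_\bQ$ has a nontrivial kernel (from classes that vanish upon restriction to the proper abelian subvariety $\alpha(X) \subsetneq X$), so $0$ would be an eigenvalue of $\alpha^*|_{\N^1}$, contradicting $(2)$. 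Hence $\alpha$ is an isogeny; the orbit $\{(\alpha^n)^* L\}_n$ is polynomially bounded in $\N^1(X)_\bR$ by $(2)$, while $((\alpha^n)^* L)^g = (\deg \alpha)^n \cdot L^g$ grows exponentially unless $\deg \alpha = 1$, forcing $\alpha \in \Aut(X)$.

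For the core of $(2) \Rightarrow (1)$, I would measure the orbit of $L$ via the positive-definite Rosati inner product $\langle \beta, \gamma \rangle_R := \tr_{V_\ell}(\beta \gamma^\dagger)$ on $\End^\circ(X)_\bR$. Under the standard identification $\N^1(X)_\bQ \cong \{\beta \in \End^\circ(X)_\bQ : \beta^\dagger = \beta\}$ with $L \leftrightarrow \id_X$, the class $(\alpha^n)^* L$ corresponds to $(\alpha^n)^\dagger (\alpha^n)$, and a Riemann--Roch calculation on abelian varieties (verifiable directly on $X = E^g$ and extended via the Poincar\'e decomposition together with the Wedderburn structure of $\End^\circ(X)$) yields an intersection--trace identity of the shape
\[
(\alpha^n)^* L \cdot L^{g-1} \;=\; c\cdot \tr_{V_\ell}\bigl((\alpha^n)^\dagger(\alpha^n)\bigr) \;=\; c\cdot \|\alpha^n\|_R^2
\]
for some positive constant $c = c(X, L)$. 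Polynomial growth of the left-hand side---a consequence of $(2)$---therefore forces $\|\alpha^n\|_R$ to grow at most polynomially in $n$. By Gelfand's spectral radius formula, applied to the matrix norm on $\End(V_\ell(X))$ induced by $\|\cdot\|_R$, this gives $\max_i |\lambda_i(V_\ell(\alpha))| \leq 1$. Running the same argument for $\alpha^{-1} \in \Aut(X)$ (whose action on $\N^1$ is the inverse of a quasi-unipotent operator and is therefore itself quasi-unipotent) yields $\min_i |\lambda_i(V_\ell(\alpha))| \geq 1$, so $|\lambda_i| = 1$ for every $i$. A final appeal to Kronecker's theorem, exactly as in $(3) \Rightarrow (4)$, shows that each $\lambda_i$ is a root of unity, establishing $(4)$ and hence $(1)$. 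The main obstacle will be rigorously establishing the intersection--trace identity above in the required generality, which demands careful bookkeeping across the isogeny factors when $X$ is not simple.
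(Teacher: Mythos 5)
Your proof is correct, and on the key implication it takes a genuinely different route from the paper. The equivalences $(1)\Leftrightarrow(4)$ and $(4)\Leftrightarrow(3)$ match the paper's argument (the wedge/compound structure of $H^2_{\et}(X,\bQ_\ell)$, \cref{thm:A} to pair each eigenvalue of $V_\ell(\alpha)$ with its complex conjugate, and Kronecker's theorem), and your $(3)\Rightarrow(2)$ via the $\alpha^*$-equivariant injection of $\N^1(X)_\bQ\otimes\bQ_\ell$ into $H^2_{\et}(X,\bQ_\ell)$ is the easy half of what the paper obtains at one stroke. The divergence is in the hard direction out of $(2)$: the paper black-boxes it by citing the spectral-radius comparison $\rho(\alpha^*|_{H^2_{\et}})=\rho(\alpha^*|_{\N^1(X)_\bQ})$ from \cite[Theorem~1.2]{Hu19} (or \cite[Theorem~1.9]{Hu-lc}) and then applies Kronecker, whereas you re-derive what is needed from first principles: Mumford's trace formula (\S 21) expressing $((\alpha^n)^*L\cdot L^{g-1})$ as a fixed positive multiple of $\tr((\alpha^n)^\dagger\alpha^n)$, the positivity of the Rosati involution, and Gelfand's formula together convert the polynomial boundedness supplied by $(2)$ into $\rho(V_\ell(\alpha))\le 1$. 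This is in substance the engine inside the cited comparison theorem, so your version buys self-containedness at the cost of re-proving a known identity --- which, note, holds for arbitrary abelian varieties directly, so the ``careful bookkeeping across isogeny factors'' you worry about is not actually needed. Two small simplifications: the pass to $\alpha^{-1}$ is superfluous, since once all roots of the monic integral polynomial $P_\alpha(t)$ have absolute value at most $1$ and $P_\alpha(0)=\deg\alpha\neq 0$, Kronecker already forces them to be roots of unity; and your preliminary step that $(2)$ forces $\alpha$ to be an isogeny (hence an automorphism) is exactly the observation recorded in \cref{rmk:C} and in the paper's remark that surjectivity of $\alpha$ is equivalent to invertibility of any of the induced actions.
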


\begin{remark}
\label{rmk:C}
Let $\alpha \in \End(X)$ and $P_\alpha(t)\in \bZ[t]$ denote the characteristic polynomial of $\alpha$ (see \cref{def:char-poly}).
Suppose that $\alpha^*|_{\N^1(X)_\bQ}$ is quasi-unipotent.
Then by \cref{thm:C}, we see that all roots of $P_\alpha(t)$ are roots of unity.
Hence $\deg(\alpha) = P_\alpha(0) = 1$ and $\alpha$ turns out to be an automorphism.
On the other hand, one cannot hope the converse of \cite[Theorem~8.5 (a)]{RRZ06} for unipotency by just considering $\alpha = [-1]_X$.

However, the situation becomes a bit subtle if one considers $\alpha \in \End^\circ(X)$ which is not a genuine endomorphism.
See \cref{rmk:C-End0} for details.
\end{remark}

\subsection{Index of unipotent automorphisms acting on the N\'eron--Severi space}
\label{subsec:DLOZ}

In this subsection, we consider a more delicate structure on automorphisms $f$ of zero entropy (i.e., $f^*|_{\N^1(X)_\bQ}$ is quasi-unipotent), the maximum size of Jordan blocks of (the Jordan canonical form of) $f^*|_{\N^1(X)_\bQ}$, denoted by $k(f)+1$.
It seems to first appear in \cite[Lemma~5.4]{AVdB90} when Artin and Van den Bergh studied the GK-dimension of the twisted homogeneous coordinate rings of surfaces; precisely, they showed that $k(f)$ is either $0$ or $2$ for surfaces.
It also plays an essential role in Keeler's work \cite[Theorem~6.1]{Keeler00} in higher dimensions.
Cantat also obtained this result independently (see the appendix to \cite{DF01} or \cite[Theorem~2.13]{Cantat14a}).
Note that $k(f)$ is always an even natural number by \cite[Lemma~6.12]{Keeler00}.
Recently, Dinh, Lin, Oguiso, and Zhang \cite{DLOZ22} proved the following striking result, extending Cantat \cite{Cantat14a} and Lo Bianco \cite{LB19} to higher dimensions.

\begin{theorem}[{cf.~\cite[Theorem~1.1]{DLOZ22}}]
\label{thm:DLOZ}
Let $X$ be a smooth complex projective variety of dimension $d$.
Let $f$ be an automorphism of zero entropy of $X$.
Then the maximum size of Jordan blocks of (the Jordan canonical form of) $f^*|_{\N^1(X)_\bQ}$ is at most $2d-1$, i.e., $k(f)\leq 2(d-1)$.
\end{theorem}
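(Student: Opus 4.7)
The plan is to extend the classical Lorentzian argument of Artin--Van den Bergh and Cantat from surfaces to higher-dimensional varieties, via mixed Hodge--Riemann bilinear relations and a limiting procedure in the spirit of Dinh--Sibony. First, replacing $f$ by a suitable positive iterate, I may assume that $f^*|_{\N^1(X)_\bR}$ is unipotent. Writing $f^* = I + N$ with $N$ nilpotent and setting $k := k(f)$, the goal becomes $k \le 2(d-1)$. When $d = 2$ the Hodge index theorem gives a Lorentzian intersection form on $\N^1(X)_\bR$ preserved by $f^*$, and a standard computation in the Lie algebra of $O(1, \rho - 1)(\bR)$ shows that every unipotent isometry has Jordan blocks of size at most $3$; this handles the base case and is the model to imitate.

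For $d \ge 3$, I would first pick an ample class $H$ with $N^k H \neq 0$, which is possible because $\ker N^k \subsetneq \N^1(X)_\bR$ is a proper subspace and the ample cone is open. Expanding
\[
(f^n)^* H \;=\; \sum_{j=0}^{k} \binom{n}{j}\, N^j H
\]
and dividing by $\binom{n}{k}$, the limit class $\omega := N^k H$ lies in the nef cone as the limit of ample classes, and $f^*\omega = \omega$ because $N\omega = N^{k+1}H = 0$. Since $f$ is an automorphism, $((f^n)^*H)^d = H^d$ is independent of $n$, and comparing the coefficient of $n^{kd}$ in this polynomial identity yields $\omega^d = 0$; comparing lower-order coefficients produces a cascade of additional vanishings such as $\omega^{d-1} \cdot N^{k-1} H = 0$, and so on.

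The main new input is the mixed Hodge--Riemann bilinear relation (Khovanskii--Teissier, in the form used by Cantat and by Dinh--Sibony): for any ample class $A$ on $X$, the form $B_A(D, D') := D \cdot D' \cdot A^{d-2}$ on $\N^1(X)_\bR$ has Lorentzian signature $(1, \rho - 1)$. Applying this with $A = (f^n)^*H$, renormalising by $n^{k(d-2)}$, and letting $n \to \infty$ yields a limiting bilinear form $B_\omega(D, D') := D \cdot D' \cdot \omega^{d-2}$. Since $f^*\omega = \omega$, the form $B_\omega$ is $f^*$-invariant, and its kernel $K \subseteq \N^1(X)_\bR$ is an $f^*$-stable subspace. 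The induced form on the quotient $\N^1(X)_\bR / K$ is non-degenerate of Lorentzian signature, so by the $d = 2$ argument the unipotent action of $f^*$ on $\N^1(X)_\bR/K$ has Jordan blocks of size at most $3$. I would then iterate the construction on $K$ using $\omega^{d-2}$ as a "lower-dimensional" auxiliary class; because $\omega^d = 0$ the numerical dimension of $\omega$ is at most $d - 1$, providing a flag of length at most $d - 1$ along which to run the induction, with each step contributing only a size-$2$ increment to the Jordan structure, yielding $k(f) \le 2(d-1)$ overall.

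The hard part will be making this stratification fully rigorous: the kernel $K$ of $B_\omega$ need not correspond to a geometric invariant subvariety, so the entire analysis has to be carried out inside $\N^1(X)_\bR$ using intersection theory alone, carefully tracking how the numerical dimension drops at each step and ensuring that each successive auxiliary mixed form remains non-degenerate modulo a strictly smaller $f^*$-invariant subspace. The bookkeeping is further complicated by the need to combine the Lorentzian bound of $3$ on each quotient with the intersection-theoretic constraints produced in the second paragraph (such as $\omega^{d-1} \cdot N^{k-1}H = 0$), which are what ultimately force the per-step increment to be $2$ rather than $3$ and thereby pin the total contribution to exactly $2(d-1)$ instead of the naive $3(d-1)$.
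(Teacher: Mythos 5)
This statement is not proved in the paper at all: it is quoted verbatim from Dinh--Lin--Oguiso--Zhang \cite{DLOZ22}, and the paper only records (in the remark following it) that their proof goes through quasi-nef sequences and the associated dynamical filtrations on $H^{1,1}(X,\bR)$. Your outline is in the same general spirit --- build an $f^*$-stable filtration of $\N^1(X)_\bR$ out of limits of normalized pullbacks of an ample class and control the Jordan structure on each graded piece via Lorentzian signature --- but as written it has a genuine gap precisely at the step that produces the stated bound. If each of the $d-1$ graded pieces of your filtration only satisfies the Lorentzian block bound $3$, the Jordan block sizes on the total space are bounded by the \emph{sum} of the bounds on the graded pieces, giving $3(d-1)$, which is strictly weaker than $2d-1$ for every $d\geq 3$. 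You acknowledge that the extra intersection-theoretic vanishings (e.g.\ $\omega^{d-1}\cdot N^{k-1}H=0$) are what must force the per-step increment down to $2$, but you do not derive this implication, and it is exactly the nontrivial content of \cite[\S 3]{DLOZ22}; without it the argument proves nothing beyond the trivial bound $k(f)\leq \rho-1$ combined with $3(d-1)$.

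A second unaddressed point: the limiting form $B_\omega(D,D')=D\cdot D'\cdot\omega^{d-2}$ obtained from the mixed Hodge--Riemann relations need not be Lorentzian on the quotient by its kernel --- signatures only degenerate in the limit, so $B_\omega$ may have no positive eigenvalue at all, and indeed it is identically zero whenever the numerical dimension of $\omega=N^kH$ is less than $d-2$. In that case $K=\N^1(X)_\bR$, the quotient is trivial, and your induction stalls at the first step. Handling this requires replacing $\omega^{d-2}$ by a more carefully chosen product of limit classes at each stage (this is what the quasi-nef sequence machinery does), and tracking how the numerical dimension drops is not automatic from $\omega^d=0$ alone. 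For the abelian-variety analogue the paper sidesteps all of this: Theorem~\ref{thm:D} is deduced directly from the pseudo-analytic decomposition of $V_\ell(f)$ (Theorem~\ref{thm:A}) and an explicit computation with second compound matrices, with no filtration argument.
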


\begin{remark}
We remark that the original \cite[Theorem~1.1]{DLOZ22} is stated for automorphisms $f$ of zero entropy of compact K\"ahler manifolds $X$ and concerns the polynomial growth of norms of iterates $f^n$ acting on the Dolbeault cohomology $H^{1,1}(X)$:
\[
\|(f^n)^*|_{H^{1,1}(X)}\| = O(n^{2(d-1)}),
\]
as $n$ tends to infinity.
Note that $\|(f^n)^*|_{H^{1,1}(X)}\| = O(n^k)$ if and only if the maximum size of Jordan blocks of $f^*|_{H^{1,1}(X)}$ is at most $k+1$.
It is easy to see, in the complex projective setting, that the polynomial growth of $f^n$ acting on $\N^1(X)_\bQ$ and on $H^{1,1}(X)$ are equivalent.
Their proof is based on quasi-nef sequences introduced by Zhang \cite{Zhang09b} and the associated dynamical filtrations on $H^{1,1}(X, \bR)$ (see \cite[\S 3]{DLOZ22}).
\end{remark}

As another application of our \cref{thm:A}, we prove a positive characteristic analog of \cref{thm:DLOZ} for abelian varieties as follows.

\begin{theorem}
\label{thm:D}
Let $f$ be an automorphism of an abelian variety $X$ of dimension $g$ over $\bk$.
Suppose that $f$ is of zero entropy.
Denote the Jordan canonical form of $V_\ell(f)$ by $J \oplus \ol{J}$, where $J$ is a direct sum of standard Jordan blocks with maximum size $k(J)+1$.
Then the maximum size of Jordan blocks of $f^*|_{\N^1(X)_\bQ}$ is $2k(J)+1$.
In particular, we have that 
\[
\|(f^n)^*|_{\N^1(X)_\bQ}\| \sim n^{k(f)} = n^{2k(J)} = O(n^{2(g-1)}),
\]
as $n$ tends to infinity.
\end{theorem}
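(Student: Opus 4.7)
My plan is to combine Theorem~\ref{thm:A} with the classical identification of $\N^1(X)_\bQ$ with the Rosati-symmetric part of $\End^\circ(X)$ and exploit Rosati positivity.

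After passing to a suitable power of $f$, which preserves the maximum Jordan block sizes both on $V_\ell(X)$ and on $\N^1(X)_\bQ$, I may assume $V_\ell(f)$ is unipotent. Fix an ample class $H$ on $X$ with polarization $\phi_H \colon X \to X^\vee$. The classical isomorphism
\[
\N^1(X)_\bQ \xrightarrow{\sim} \{A \in \End^\circ(X) : A^\dagger = A\}, \qquad H \longleftrightarrow \id_X,
\]
with $\dagger$ the Rosati involution attached to $\phi_H$, translates the pullback action $f^*$ into the operator $T \colon A \mapsto f^\dagger A f$ on Rosati-symmetric elements.

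For the upper bound, extend $T$ to $\End(V_\ell(X)) \otimes \bar\bQ_\ell$ via the faithful embedding $V_\ell$. Then $T = L_{f^\dagger} R_f$ factors as a product of commuting left- and right-multiplications, and identifies with $V_\ell(f^\dagger) \otimes V_\ell(f)^T$ under $\End(V_\ell(X)) \cong V_\ell(X) \otimes V_\ell(X)^*$. By Theorem~\ref{thm:A}, each factor has Jordan form $J \oplus \bar{J}$ with maximum block size $k(J) + 1$. Since the maximum block size of $J_a \otimes J_b$ (for unipotent Jordan blocks) is $a + b - 1$, the maximum Jordan block of $T$ on $\End(V_\ell(X))$, and hence on the subspace $\N^1(X)_\bQ \otimes \bar\bQ_\ell$, is at most $2k(J) + 1$.

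For the lower bound, I compute the Jordan chain generated by $\id$. Setting $N := f - \id$ and $N^\dagger := f^\dagger - \id$, we have $T - 1 = L_{N^\dagger} + R_N + L_{N^\dagger} R_N$, with $L_{N^\dagger}$ and $R_N$ commuting and each nilpotent of index at most $k(J) + 1$ on $\End(V_\ell(X))$. Multinomially expanding $(T - 1)^{2k(J)}$ and applying to $\id$, the only term surviving the nilpotency constraints is
\[
(T - 1)^{2k(J)}(\id) = \binom{2k(J)}{k(J)} \, (N^\dagger)^{k(J)} N^{k(J)}.
\]
The main obstacle is verifying this is nonzero. By faithfulness of $V_\ell$ and the nilpotency index $k(J)+1$ of $V_\ell(N)$, the element $N^{k(J)}$ is nonzero in $\End^\circ(X)$. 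Rosati positivity (see \cref{prop:A*A}) then yields $\tr((N^{k(J)})^\dagger N^{k(J)}) = \tr((N^\dagger)^{k(J)} N^{k(J)}) > 0$, giving the nonvanishing. Thus $\id$ generates a Jordan chain of length exactly $2k(J) + 1$, matching the upper bound; the asymptotic $\|(f^n)^*|_{\N^1(X)_\bQ}\| \sim n^{2k(J)} = O(n^{2(g-1)})$ then follows since $k(J) + 1 \leq g$.
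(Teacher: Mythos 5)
Your argument is correct, and it takes a genuinely different route from the paper. The paper works on $H^2_{\et}(X,\bQ_\ell)=\wedge^2 H^1_{\et}(X,\bQ_\ell)$: it represents $(f^n)^*$ there by the second compound matrix of $J^n\oplus J^n$ and reads off the growth $\sim n^{2k(J)}$ from the largest $2\times 2$ minor $\binom{n}{k_1-1}^2$, then transfers the answer to $\N^1(X)_\bQ$. You instead work directly on $\N^1(X)_\bQ$ via the identification with the Rosati-symmetric part of $\End^\circ(X)$, on which $f^*$ becomes $A\mapsto f^\dagger A f$; your upper bound is the Clebsch--Gordan rule for tensor products of unipotent Jordan blocks applied to $L_{f^\dagger}R_f$ (using \cref{thm:A} and the fact that $f^\dagger$ has the same Jordan type as $f$), and your lower bound exhibits an explicit Jordan chain through the polarization $\id_X\leftrightarrow H$: the multinomial bookkeeping correctly isolates the single surviving term $\binom{2k(J)}{k(J)}(N^\dagger)^{k(J)}N^{k(J)}=\binom{2k(J)}{k(J)}(N^{k(J)})^\dagger N^{k(J)}$, whose nonvanishing follows from positivity of the Rosati form on the nonzero element $N^{k(J)}$. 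Your approach has the advantage of locating the extremal Jordan chain inside $\N^1(X)_\bQ$ itself (starting from an ample class), whereas the paper computes on the a priori larger space $H^2_{\et}$ and must invoke the comparison between the two actions. One correction: the positivity $\Tr\bigl((N^{k(J)})^\dagger N^{k(J)}\bigr)>0$ is not \cref{prop:A*A} (which concerns the degree of determinants of power sums); the correct reference is the positivity of the Rosati involution with respect to the reduced trace (Mumford, \S 21, Theorem~1), which is what you actually need and which does hold here.
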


\begin{convention*}
\label{convention}
Throughout, for two positive sequences $a_n$ and $b_n$, the notation $a_n \sim b_n$ means that $\lim_{n\to \infty} a_n/b_n = C$ for some (unspecified) positive constant $C>0$. Note that, by allowing $C\neq 1$, our usage is more general than the common one in the literature.
On the other hand, the same symbol $\sim$ is also used to denote similarity of matrices.
\end{convention*}

\begin{remark}
The authors of \cite{DLOZ22} have also carried out a similar calculation when $f$ is an automorphism of a complex torus $X=\bC^g/\Lambda$.
In that case, if $f^*|_{H^{1,0}(X)}$ is represented by a standard Jordan block $J_{1,g}$ with eigenvalue $1$ and of size $g$, then $f^*|_{H^{1,1}(X)}$ could be represented by the Kronecker product $J_{1,g}\otimes J_{1,g}$ of $J_{1,g}$ (see \cite[\S 4.2]{HJ94}).
It follows immediately that
\[
\|(f^n)^*|_{H^{1,1}(X)}\| = \| (J_{1,g}\otimes J_{1,g})^n \| = \| J_{1,g}^n\otimes J_{1,g}^n \| = \| J_{1,g}^n \|^2 \sim n^{2(g-1)},
\]
as $n$ tends to infinity.
See \cite[\S 4.1]{DLOZ22}.
However, in positive characteristic, this argument does not work straightforwardly.
Instead, we use the fact that $H^2_{\et}(X, \bQ_\ell)$ is the wedge product of $H^1_{\et}(X, \bQ_\ell)$ and the induced action of $f$ on $H^2_{\et}(X, \bQ_\ell)$ is represented by the second compound\footnote{The {\it $r$-th compound matrix $C_r(M)$} of a given matrix $M\in \Mat_{n}(\bF)$ over some field $\bF$ is defined as the matrix of all $r$-by-$r$ minors of $M$;
see, e.g., \cite[\S 6.6]{KRY09}.} matrix of $f^*|_{H_{\et}^1(X,\bQ_\ell)}$.

In the end, we remark that, in order to study dynamics of automorphisms of zero entropy of arbitrary smooth projective varieties $X$ in positive characteristic, it might also be interesting to define dynamical filtrations on $\N^1(X)_\bR$ based on \cite{Hu20a}, though it will not be discussed here.
\end{remark}


\section{Polynomial volume growth of automorphisms of zero entropy}
\label{sec:plov}


We work over an algebraically closed field $\bk$ of arbitrary characteristic.
Let $X$ be a smooth projective variety of dimension $d$, $H_X$ an ample divisor on $X$, and $f\colon X\to X$ a surjective self-morphism of $X$, all defined over $\bk$.
For any prime $\ell \neq \Char(\bk)$, we denote $\ell$-adic \'etale cohomology by $H^*_{\et}(X, \bQ_\ell)$.
For any $0\leq k\leq d$, let $\N^k(X)$ denote the group of (integral) algebraic cycles modulo numerical equivalence so that $\N^k(X)_\bR \coloneqq \N^k(X) \otimes_\bZ \bR$ becomes a finite-dimensional real vector space.
We choose an arbitrary norm $\|\cdot\|$ on it.
Then the {\it $k$-th (numerical) dynamical degree $\lambda_k(f)$} of $f$ is defined by
\begin{align*}
\lambda_k(f) &\coloneqq \lim_{n\to \infty} ((f^n)^*H_X^k \cdot H_X^{d-k})^{1/n} \\
&= \lim_{n\to \infty} \|(f^n)^*|_{\N^k(X)_\bR}\|^{1/n} \\
&= \rho(f^*|_{\N^k(X)_\bR}),
\end{align*}
where $\rho(\varphi)$ denotes the spectral radius of a linear transformation $\varphi$.
It is well known that the $\lambda_k(f)$ satisfy the log-concavity property following from the Khovanskii--Teissier inequality (see, e.g., \cite[Lemma~4.7]{HT}).
In particular, $\lambda_1>1$ if and only if $\lambda_k(f)>1$ for all (or for some) $k$.

\begin{definition}
\label{def:entropy}
The {\it algebraic entropy $h_{\alg}(f)$} of $f$ is defined by
\[
h_{\alg}(f) \coloneqq \log \max_{0\leq k\leq d} \lambda_k(f).
\]
We say that $f$ is of {\it zero entropy}, if $h_{\alg}(f) = 0$; otherwise, $f$ is of {\it positive entropy}.
\end{definition}

We include the following standard facts for the convenience of the reader.

\begin{lemma}
\label{lemma:zero-entropy}
With notation as above, the following statements are equivalent.
\begin{enumerate}[label=\emph{(\arabic*)}, ref=\arabic*]
\item \label{lemma:zero-entropy-1} $f$ is of zero entropy.
\item \label{lemma:zero-entropy-2} All dynamical degrees $\lambda_k(f)$ of $f$ are equal to one.
\item \label{lemma:zero-entropy-3} The first dynamical degree $\lambda_1(f)$ of $f$ is equal to one.
\item \label{lemma:zero-entropy-4} All eigenvalues of $f^*|_{\N^1(X)_\bQ}$ are roots of unity.
\item \label{lemma:zero-entropy-5} The pullback action $f^*|_{\N^1(X)_\bQ}$ of $f$ on $\N^1(X)_\bQ$ is {\it quasi-unipotent}, i.e., $(f^n)^*|_{\N^1(X)_\bQ}$ is unipotent for some $n\in \bZ_{>0}$.
\end{enumerate}
\end{lemma}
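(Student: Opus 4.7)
The proof splits naturally into two clusters: the ``dynamical'' conditions (1)--(3), which are handled by standard properties of dynamical degrees, and the ``algebraic'' conditions (4)--(5), which are a linear-algebra translation of each other. The bridge is the equivalence (3) $\Leftrightarrow$ (4), secured by Kronecker's theorem applied to the integer matrix $f^*|_{\N^1(X)_\bQ}$.

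For the block (1) $\Leftrightarrow$ (2) $\Leftrightarrow$ (3), I would first note that $\lambda_0(f) = 1$ and $\lambda_d(f) = \deg(f) \geq 1$ (the latter since $f$ is surjective, hence dominant and generically finite). Combining this with the log-concavity of $k \mapsto \log \lambda_k(f)$ already recalled in the excerpt, the concave sequence $\log \lambda_k(f)$ has nonnegative endpoints and is therefore nonnegative throughout; equivalently, $\lambda_k(f) \geq 1$ for every $k$. This immediately gives (1) $\Leftrightarrow$ (2) from the definition $h_{\alg}(f) = \log \max_k \lambda_k(f)$. The implication (2) $\Rightarrow$ (3) is trivial, and for (3) $\Rightarrow$ (2) I iterate the concavity inequality $\log \lambda_{k+1} \leq 2\log \lambda_k - \log \lambda_{k-1}$ starting from $\log \lambda_0 = \log \lambda_1 = 0$: this forces $\log \lambda_k \leq 0$ for each $k$, and combined with $\log \lambda_k \geq 0$ yields $\lambda_k = 1$ for all $k$.

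For the bridge (3) $\Leftrightarrow$ (4), the crucial observation is that $f^*$ acts on the integral lattice $\N^1(X)/\tors$, so its characteristic polynomial lies in $\bZ[t]$ and every eigenvalue is an algebraic integer whose full Galois orbit consists again of eigenvalues. Moreover, since $f$ is surjective and hence generically finite, the projection formula $f_* f^* = \deg(f)\cdot \id$ on $\N^1(X)_\bQ$ makes $f^*$ invertible there; in particular $0$ is not an eigenvalue. Writing $\lambda_1(f) = \rho(f^*|_{\N^1(X)_\bR})$, condition (3) says every eigenvalue of $f^*|_{\N^1(X)_\bQ}$ has absolute value at most $1$. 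By Kronecker's theorem, a nonzero algebraic integer all of whose Galois conjugates have absolute value at most $1$ must be a root of unity, yielding (3) $\Rightarrow$ (4); the converse is immediate from the definition of spectral radius. Finally, (4) $\Leftrightarrow$ (5) is read off from the Jordan canonical form: if all eigenvalues of $f^*$ are roots of unity whose orders divide a common $n$, then all eigenvalues of $(f^n)^*$ equal $1$ and Cayley--Hamilton makes $(f^n)^* - \id$ nilpotent, and conversely $(f^n)^*$ unipotent forces every eigenvalue of $f^*$ to be an $n$-th root of unity.

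The only point requiring a bit of care is ruling out $0$ as an eigenvalue of $f^*|_{\N^1(X)_\bQ}$ before invoking Kronecker's theorem, which is handled by the projection formula. Everything else is a routine assembly of standard facts from algebraic dynamics with elementary number theory, so I expect no genuine obstacle.
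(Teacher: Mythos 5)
Your proof is correct and follows essentially the same route as the paper: the trivial implications among (1), (2), (3) and between (4), (5), log-concavity of the dynamical degrees for (3) $\Rightarrow$ (2), and Kronecker's theorem applied to the integral, invertible operator $f^*|_{\N^1(X)_\bQ}$ for (3) $\Rightarrow$ (4). Your extra care in verifying $\lambda_k(f)\geq 1$ via the endpoints of the concave sequence and in excluding $0$ as an eigenvalue are details the paper leaves implicit, not a different approach.
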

\begin{proof}
Clearly, Statement~\eqref{lemma:zero-entropy-1} $\Longleftrightarrow$ Statement~\eqref{lemma:zero-entropy-2} $\Longrightarrow$ Statement~\eqref{lemma:zero-entropy-3} and Statement~\eqref{lemma:zero-entropy-5} $\Longleftrightarrow$ Statement~\eqref{lemma:zero-entropy-4} $\Longrightarrow$ Statement~\eqref{lemma:zero-entropy-3} hold by definition.
Statement~\eqref{lemma:zero-entropy-3} $\Longrightarrow$ Statement~\eqref{lemma:zero-entropy-2} follows from the log-concavity property of the dynamical degrees.
Statement~\eqref{lemma:zero-entropy-3} $\Longrightarrow$ Statement~\eqref{lemma:zero-entropy-4} follows from the fact that all eigenvalues of $f^*|_{\N^1(X)_\bQ}$ are algebraic integers and a classical theorem of Kronecker asserting that a nonzero algebraic integer $\alpha$ is a root of unity if all of its Galois conjugates (including $\alpha$) have (complex) absolute value at most $1$.
\end{proof}

\begin{remark}
We refer to \cite[Definition~1.1 and Conjecture~1.3]{HT} for another definition of (cohomological) dynamical degrees using $\ell$-adic \'etale cohomology and the so-called dynamical degree comparison (DDC) conjecture (proposed by Truong \cite{Truong}), respectively.
Among other things, Truong proved equivalence of dynamical degrees when $f$ is of zero entropy, and hence it is unnecessary to distinguish these two definitions of dynamical degrees.
Besides some partial results on the abelian variety case \cite{Hu19,Hu-lc}, the DDC conjecture is still wide open since it would surprisingly imply a generalization of Weil's Riemann Hypothesis for polarized endomorphisms (see \cite[Theorem~1.11 (3)]{HT}).
\end{remark}

We now study algebraic dynamics of an automorphism $f$ of zero entropy of a smooth projective variety $X$ of dimension $d$ over $\bk$.
We give the definition of the polynomial volume growth $\plov(f)$ of $f$ and investigate some basic properties.
We refer to \cite{CPR21,DLOZ22,LOZ} for recent advance on this subject (especially, when $\bk=\bC$).

Recall that Gromov's iterated graph $\Gamma_n\subset X^n$ is the graph of the morphism
\[
\begin{array}{cccc}
(f,\ldots,f^{n-1}) \colon & X & \lra & X^{n-1} \\[2pt]
& x & \longmapsto & (f(x), f^2(x), \ldots, f^{n-1}(x)),
\end{array}
\]
for any $n\in \bZ_{>0}$.
The volume of $\Gamma_n$ with respect to a fixed ample divisor $H_X$ on $X$ (or rather, the induced ample divisor on the product variety $X^{n}$) is defined as follows:
\[
\Vol(\Gamma_n) = \Gamma_n \cdot \Bigg(\sum_{i=1}^{n}\pr_i^*\!H_X\Bigg)^d = \Delta_n(f, H_X)^d,
\]
where $\pr_i \colon X^n \to X$ is the projection from $X^n$ to its $i$-th factor $X$ and
\[
\Delta_n(f, H_X) \coloneqq \sum_{m=0}^{n-1} (f^m)^*H_X.
\]

Cantat and Paris-Romaskevich \cite{CPR21} introduce the polynomial volume growth\footnote{Note, however, that they denote it by $\mathrm{povol}(f)$. In this article, we follow the notation in \cite{Gromov03,LOZ}.} $\plov(f)$ of $f$ when they study the so-called polynomial entropy $h_{\mathrm{pol}}(f)$ of $f$ in holomorphic dynamics.
Here we present an algebraic variant.
The most fascinating point of this dynamical invariant $\plov(f)$ is its connection with the so-called Gelfand--Kirillov dimension $\GK B$ of the twisted homogeneous coordinate ring $B$ associated with $(X, f, \sO_X(H_X))$.
This surprising connection is first noticed by Lin, Oguiso, and Zhang; see \cite[\S 8]{LOZ}.

\begin{definition}[{cf. \cite[eq.~(2.7), p.~1354]{CPR21}}]
\label{def:plov}
With notation as above, the {\it polynomial volume growth $\plov(f)$} of $f$ (with respect to $H_X$) is defined by
\[
\plov(f) \coloneqq \limsup_{n\to \infty} \frac{\log \Vol (\Gamma_n)}{\log n} = \limsup_{n\to \infty} \frac{\log (\Delta_n(f, H_X)^d)}{\log n}.
\]
\end{definition}

Many basic properties of the above invariant $\plov$ have been summarized in \cite[\S 2.1]{LOZ}.
We only collect three of them, whose proofs are the same as \cite{LOZ}.

\begin{lemma}[{cf.~\cite[Lemma~2.1]{LOZ}}]
\label{lemma:plov-indep-div}
In \cref{def:plov}, the invariant $\plov(f)$ is independent of the choice of ample $H_X$.
\end{lemma}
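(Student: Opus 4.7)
The plan is a straightforward sandwich comparison between two ample classes. For any two ample divisors $H_X$ and $H'_X$ on $X$, openness of the ample cone yields positive constants $c_1, c_2 > 0$ such that $H'_X - c_1 H_X$ and $c_2 H_X - H'_X$ are both nef. Since any surjective self-morphism of $X$ has nef-preserving pullback, pulling back these inequalities through $f, f^2, \ldots, f^{n-1}$ and summing gives
\[
c_1 \Delta_n(f, H_X) \leq \Delta_n(f, H'_X) \leq c_2 \Delta_n(f, H_X),
\]
where ``$\leq$'' means that the difference is a nef class.

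Next I would upgrade these to a comparison of top self-intersection numbers. Each of $\Delta_n(f, H_X)$ and $\Delta_n(f, H'_X)$ is nef, being a sum of nef classes. Using the telescoping identity
\[
B^d - A^d = \sum_{i = 0}^{d-1} (B - A) \cdot B^i \cdot A^{d-1-i}
\]
together with non-negativity of intersections of nef classes on a smooth projective $d$-fold, one obtains $A^d \leq B^d$ whenever $A$, $B$, and $B-A$ are all nef. Applying this twice yields
\[
c_1^d \, \Delta_n(f, H_X)^d \;\leq\; \Delta_n(f, H'_X)^d \;\leq\; c_2^d \, \Delta_n(f, H_X)^d,
\]
and the same monotonicity argument applied to $\Delta_n(f, H_X) - H_X$, which is nef, shows $\Delta_n(f, H_X)^d \geq H_X^d > 0$, so that all logarithms below are well defined.

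Taking logarithms, dividing by $\log n$, and letting $n \to \infty$, the additive constants $d \log c_1/\log n$ and $d \log c_2/\log n$ vanish; passing to $\limsup$ on each side then gives
\[
\limsup_{n \to \infty} \frac{\log \Delta_n(f, H'_X)^d}{\log n} = \limsup_{n \to \infty} \frac{\log \Delta_n(f, H_X)^d}{\log n},
\]
which is the desired independence. I do not anticipate any serious obstacle here: the entire argument reduces to a sandwich estimate between two ample classes and the monotonicity of top self-intersection on the nef cone, both classical ingredients, together with the fact that surjective morphisms pull nef back to nef, which is exactly what is needed to iterate the sandwich over the forward $f$-orbit.
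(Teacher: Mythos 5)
Your proof is correct and is essentially the argument the paper relies on (the paper omits the proof, deferring to \cite[Lemma~2.1]{LOZ}, which is exactly this sandwich between two ample classes combined with nef-preservation under pullback by a surjective morphism and monotonicity of the top self-intersection on the nef cone). Nothing further is needed.
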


The result below, also due to Lin--Oguiso--Zhang, says that $\plov$ is invariant after iterations.

\begin{lemma}[{cf.~\cite[Lemma~2.6]{LOZ}}]
\label{lemma:plov-iterate}
For any $0\neq m\in \bZ$, we have $\plov(f^m) = \plov(f)$.
\end{lemma}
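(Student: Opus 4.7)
The plan is to handle the case $m \geq 1$ first via a single reindexing identity, and then to reduce the case $m < 0$ to the case $m = -1$, which admits a direct argument based on the fact that pullback by an automorphism preserves top self-intersection numbers.

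\emph{Positive iterates.} For $m \geq 1$, I introduce the divisor
\[
H' \coloneqq \Delta_m(f, H_X) = \sum_{i=0}^{m-1}(f^i)^*H_X,
\]
which is ample as a sum of ample divisors, each $f^i$ being an automorphism. The key identity, obtained by writing every $j \in \{0, 1, \dots, mn-1\}$ uniquely as $j = mk + i$ with $0 \leq k < n$ and $0 \leq i < m$, reads
\[
\Delta_{mn}(f, H_X) = \sum_{k=0}^{n-1}(f^{mk})^*H' = \Delta_n(f^m, H').
\]
For the upper bound $\plov(f^m) \leq \plov(f)$, observe that $\Delta_n(f^m, H_X) \leq \Delta_{mn}(f, H_X)$ as sums of ample (hence nef) divisors; expanding both top self-intersections as sums of $d$-fold intersections of nef classes (all of which are nonnegative) gives $\Delta_n(f^m, H_X)^d \leq \Delta_{mn}(f, H_X)^d$. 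Taking logarithms and dividing by $\log n$, while using $\log(mn)/\log n \to 1$, yields the inequality. For the reverse $\plov(f) \leq \plov(f^m)$, write any $N$ as $N = mq + r$ with $0 \leq r < m$ and use $\Delta_N(f, H_X) \leq \Delta_{m(q+1)}(f, H_X) = \Delta_{q+1}(f^m, H')$; the same logarithmic comparison, together with \cref{lemma:plov-indep-div} applied to $\plov(f^m)$ computed against the ample class $H'$, gives the reverse inequality.

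\emph{Negative iterates.} For $m = -1$, reindexing $j = n-1-k$ combined with the functoriality $(f^{n-1-k})^* = (f^{n-1})^* \circ (f^{-k})^*$ yields
\[
(f^{n-1})^*\Delta_n(f^{-1}, H_X) = \sum_{k=0}^{n-1}(f^{n-1-k})^*H_X = \Delta_n(f, H_X).
\]
Since $f^{n-1}$ is an automorphism, its pullback preserves top self-intersections, so $\Delta_n(f^{-1}, H_X)^d = \Delta_n(f, H_X)^d$ for every $n$, forcing $\plov(f^{-1}) = \plov(f)$. For arbitrary $m < 0$, applying the positive case to $f^{-1}$ with exponent $|m|$ produces $\plov(f^m) = \plov((f^{-1})^{|m|}) = \plov(f^{-1}) = \plov(f)$.

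No serious obstacle is expected; the only point requiring attention is the monotonicity $A^d \leq (A+E)^d$ when $A$ and $E$ are sums of nef divisors, which reduces via binomial expansion to the nonnegativity of $d$-fold intersections of nef classes.
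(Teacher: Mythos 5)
Your argument is correct and for $m\geq 1$ it is essentially the paper's: the same reindexing identity $\Delta_{mn}(f,H_X)=\Delta_n(f^m,\Delta_m(f,H_X))$ drives both proofs, the only cosmetic difference being that you close the comparison by invoking \cref{lemma:plov-indep-div} for the ample class $\Delta_m(f,H_X)$, whereas the paper dominates $\Delta_m(f,H_X)$ by $c_mH_X$ directly (the same mechanism that underlies the independence lemma). Your explicit reduction of negative $m$ to $m=-1$ via $(f^{n-1})^*\Delta_n(f^{-1},H_X)=\Delta_n(f,H_X)$ is a clean way to cover the case the paper leaves to the cited reference.
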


By the above lemma, when studying $\plov(f)$ of an automorphism $f$ of zero entropy of $X$, it is harmless to replace $f$ by some iterate so that $f^*|_{\N^1(X)_\bQ}$ is {\it unipotent}, i.e., all eigenvalues of $f^*|_{\N^1(X)_\bQ}$ are $1$.
Then following \cite{AVdB90,Keeler00}, one can decompose $f^*|_{\N^1(X)_\bQ}$ as $\id + N$ for some nilpotent operator $N$ on $\N^1(X)_\bQ$ satisfying that $N^{k(f)}\neq 0$ and $N^{k(f)+1}=0$, i.e., $k(f)+1$ is the maximum size of Jordan blocks of $f^*|_{\N^1(X)_\bQ}$ (a.k.a. the index of the eigenvalue $1$ of $f^*|_{\N^1(X)_\bQ}$).\footnote{Note that $k(f)$ is always an even natural number; see \cite[Lemma~6.12]{Keeler00}.}
It thus follows that
\begin{align*}
\Delta_n(f, H_X) &\num \sum_{m=0}^{n-1} (\id + N)^m H_X = \sum_{m=0}^{n-1} \sum_{i=0}^{m} \binom{m}{i} N^i H_X \\
&= \sum_{i=0}^{n-1} \sum_{m=i}^{n-1} \binom{m}{i} N^i H_X = \sum_{i=0}^{k(f)} \binom{n}{i+1} N^i H_X.
\end{align*}
Using this decomposition, the authors of \cite{LOZ} have obtained the following finiteness result on $\plov$.
See also \cite[Proof of Lemma~6.13]{Keeler00}.

\begin{lemma}[{cf.~\cite[Lemma~2.16]{LOZ}}]
\label{lemma:plov}
With notation as above, $\plov(f)$ is equal to the degree of the following polynomial (with indeterminate $n$)
\begin{align*}
\Delta_n(f, H_X)^d &= \Bigg( \sum_{i=0}^{k(f)} \binom{n}{i+1} N^iH_X \Bigg)^d \\
&=\sum_{0\leq i_1,\ldots,i_d \leq k(f)} \, \prod_{j=1}^d \binom{n}{i_j+1} N^{i_j}H_X.
\end{align*}
In particular, one has that
\[
\plov(f) \leq d+\max\Bigg\{ \sum_{j=1}^d i_j : N^{i_1}H_X \cdots N^{i_d} H_X \neq 0, \, 0\leq i_1,\ldots,i_d \leq k(f) \Bigg\}.
\]
\end{lemma}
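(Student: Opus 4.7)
The plan is to reduce to the unipotent case by invoking \cref{lemma:plov-iterate}, then to express $\Delta_n(f,H_X)^d$ as an honest polynomial $P(n)\in\bQ[n]$ using the computation recorded just above the statement, and finally to read off its degree. After replacing $f$ by a suitable power I may assume $f^*|_{\N^1(X)_\bQ}=\id+N$ with $N$ nilpotent of index $k(f)+1$, and the hockey-stick identity (already used above) gives the numerical equivalence
\[
\Delta_n(f,H_X) \num \sum_{i=0}^{k(f)} \binom{n}{i+1} N^iH_X.
\]

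Next I would expand the $d$-fold self-intersection as
\[
\Delta_n(f,H_X)^d = \sum_{0\le i_1,\ldots,i_d\le k(f)} \prod_{j=1}^d \binom{n}{i_j+1} \cdot N^{i_1}H_X \cdots N^{i_d}H_X,
\]
which makes sense because intersection numbers depend only on the numerical class. Since each binomial $\binom{n}{i_j+1}$ is a polynomial in $n$ of degree $i_j+1$, this exhibits $P(n) \coloneqq \Delta_n(f,H_X)^d$ as a polynomial in $n$ of degree at most $\max\bigl\{d+\sum_j i_j : N^{i_1}H_X\cdots N^{i_d}H_X\ne 0,\ 0\le i_1,\ldots,i_d \le k(f)\bigr\}$.

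To identify $\deg P$ with $\plov(f)$ I need the leading coefficient of $P$ to be nonzero (in fact positive), and here I would invoke ampleness: because $f$ is an automorphism, each $(f^m)^*H_X$ is ample, hence so is $\Delta_n(f,H_X)$ for every $n\ge 1$, so $P(n) = \Delta_n(f,H_X)^d > 0$ by the standard intersection theory of ample divisors. A real polynomial taking strictly positive values at all sufficiently large integers must have positive leading coefficient, so $P(n)\sim c\cdot n^{\deg P}$ for some $c>0$, and consequently
\[
\plov(f) = \limsup_{n\to\infty}\frac{\log P(n)}{\log n} = \deg P.
\]
The advertised inequality then drops out from the degree bound in the previous paragraph.

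The only step calling for any care is the positivity of $P(n)$, which is what prevents $\deg P$ from being lowered by accidental cancellations among leading terms of the various summands; ampleness of $\Delta_n(f,H_X)$ handles this essentially for free in the automorphism setting. Beyond that the argument is pure bookkeeping, and I do not foresee a genuine obstacle.
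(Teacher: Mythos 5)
Your proof is correct and follows essentially the same route as the sources the paper cites for this lemma (\cite[Lemma~2.16]{LOZ} and the proof of \cite[Lemma~6.13]{Keeler00}): expand $\Delta_n(f,H_X)^d$ via the identity $\Delta_n(f,H_X)\num\sum_{i=0}^{k(f)}\binom{n}{i+1}N^iH_X$ into a polynomial in $n$, bound its degree term by term, and use ampleness of $\Delta_n(f,H_X)$ to guarantee a positive leading coefficient so that $\plov(f)$ equals that degree. The paper records the statement only with a citation, and your positivity observation is exactly the point that makes the identification of $\plov(f)$ with the degree legitimate.
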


\begin{remark}
\label{rmk:plov}
It turns out to be a nontrivial question to determine vanishing or nonvanishing of the intersection numbers $N^{i_1}H_X \cdots N^{i_d} H_X$ in general.
Using dynamical filtrations introduced in \cite{DLOZ22}, Lin, Oguiso, and Zhang manage to obtain some nontrivial vanishing results of this type and derive an upper bound for $\plov$ in characteristic zero.
See \cite[Theorem~4.2]{LOZ}, which improves Keeler's upper bound given in \cite[Theorem~6.1 (3)]{Keeler00}.
\end{remark}

In a separate note, via a combinatorial approach, the author manages to prove the following.

\begin{theorem}
\label{thm:quadratic}
Let $f$ be an automorphism of a smooth projective variety $X$ of dimension $d\geq 2$ over $\bk$.
Suppose that $f^*|_{\N^1(X)_\bQ}$ is (quasi-)unipotent with $k(f)=2$.
Then we have
\[
\plov(f) \leq 2\lfloor \frac{d}{2} \rfloor + d = \left\{
\begin{array}{ll}
2d, & \text{for even } d, \\
2d-1, & \text{for odd } d.
\end{array}
\right.
\]
\end{theorem}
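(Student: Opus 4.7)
The plan is to apply \cref{lemma:plov} and show that $H_X^a D_1^b D_2^c = 0$ whenever $a+b+c = d$ and $b+2c > 2\lfloor d/2 \rfloor$, where I abbreviate $D_i := N^i H_X$ and $N := f^* - \id$. After replacing $f$ by a suitable iterate (\cref{lemma:plov-iterate}) we may assume $N$ acts unipotently on $\N^1(X)_\bR$ with $N^3 = 0$. I then pass to the ``centered'' basis $E_1 := D_1 - D_2/2$, $E_2 := D_2$, so that
\[
(f^*)^t H_X = H_X + t E_1 + \tfrac{t^2}{2}\, E_2
\]
is a clean polynomial of degree $2$ in $t$. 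Crucially, $NE_2 = N^3 H_X = 0$, so $E_2$ is $f^*$-invariant.

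The key tool is the polynomial identity in $\bR[t,r]$
\[
(H_X + t E_1 + r E_2)^d \;=\; \bigl(H_X + (r - \tfrac{t^2}{2})E_2\bigr)^d,
\]
obtained by applying $((f^*)^t \beta)^d = \beta^d$ to $\beta = H_X + sE_2$ and then substituting $r = s + t^2/2$. I would equate the $[t^j r^k]$ coefficients on both sides and sort the resulting relations into three regimes:
\begin{enumerate}[label=\emph{(\alph*)}, leftmargin=*, noitemsep, topsep=0pt]
\item for odd $j$ with $j + k \leq d$: $H_X^{d-j-k} E_1^j E_2^k = 0$;
\item for even $j = 2k'$ with $j + k \leq d$: $H_X^{d-2k'-k} E_1^{2k'} E_2^k$ is a nonzero rational multiple of $H_X^{d-k-k'} E_2^{k+k'}$;
\item for even $j = 2k'$ with $j + k > d$ but $k + k' \leq d$: the LHS coefficient is out of range, so the RHS must vanish, forcing $H_X^{d-k-k'} E_2^{k+k'} = 0$.
\end{enumerate}

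Setting $p = d - k - k'$ and $q = k + k'$, the condition in \emph{(c)} can be realized (take $k' = p+1$, $k = q - p - 1$) precisely when $q > p$, i.e.\ $q \geq \lfloor d/2 \rfloor + 1$. Thus $H_X^p E_2^q = 0$ for every $q > \lfloor d/2 \rfloor$. Combining with \emph{(a)} and \emph{(b)}, any nonzero intersection $H_X^a E_1^b E_2^c$ must have $b$ even and $c + b/2 \leq \lfloor d/2 \rfloor$, equivalently $b + 2c \leq 2 \lfloor d/2 \rfloor$.

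To finish, I would rewrite $\Delta_n(f, H_X) = nH_X + \binom{n}{2} E_1 + \bigl(\tfrac{1}{2}\binom{n}{2} + \binom{n}{3}\bigr) E_2$ in the $E$-basis, whose $H_X$-, $E_1$-, $E_2$-coefficients are polynomials in $n$ of degrees $1$, $2$, $3$, respectively. Expanding $\Delta_n^d$ trinomially, each surviving term $H_X^a E_1^b E_2^c$ contributes degree $a + 2b + 3c = d + (b + 2c) \leq d + 2\lfloor d/2 \rfloor$ in $n$, giving $\plov(f) = \deg_n \Delta_n^d \leq 2\lfloor d/2\rfloor + d$. The delicate point is step \emph{(c)}: the \emph{asymmetry} between the LHS range $j + k \leq d$ and the RHS range $k + k' \leq d$ is exactly what produces the sharp bound and simultaneously handles both parity cases of $d$.
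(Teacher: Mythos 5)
Your argument is correct, and I verified each step. Note that the paper contains no proof of \cref{thm:quadratic} to compare against: the result is stated as established ``in a separate note, via a combinatorial approach,'' with the remark that the authors of \cite{LOZ} obtained it independently via dynamical filtrations. Your proof is therefore a genuinely self-contained alternative, and a clean one. The essential idea is to exploit the invariance of intersection numbers, $((f^*)^t\beta)^d=\beta^d$, applied to the one-parameter family $\beta=H_X+sE_2$, where $E_2=N^2H_X$ is $f^*$-invariant and the centered class $E_1=NH_X-\tfrac12N^2H_X$ makes $(f^*)^tH_X=H_X+tE_1+\tfrac{t^2}{2}E_2$ a genuine quadratic in $t$; the resulting two-variable polynomial identity in $\bR[t,r]$ (valid because it holds for all integers $t\ge 0$ and a full line of $r$ for each such $t$) encodes all the needed vanishings as coefficient comparisons. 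Your three regimes check out: odd powers of $t$ give (a); the even-power identities in range give (b) with nonzero multinomial coefficients on both sides; and the asymmetry of the two ranges ($2k'+k\le d$ on the left versus $k+k'\le d$ on the right, with a valid choice $k'=p+1$, $k=q-p-1$ exactly when $q>d-q$) yields $H_X^{d-q}E_2^{q}=0$ for all $q>\lfloor d/2\rfloor$, which is the sharp input distinguishing the parities of $d$. The degrees $1,2,3$ of the $H_X$-, $E_1$-, $E_2$-coefficients of $\Delta_n$ and the identity $a+2b+3c=d+(b+2c)$ then give $\deg_n\Delta_n^d\le d+2\lfloor d/2\rfloor$ via \cref{lemma:plov}, and the initial reduction to the unipotent case is harmless by \cref{lemma:plov-iterate} since passing to an iterate does not change $k(f)$. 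In effect you have proved the case $k(f)=2$ of the vanishing asked for in \cref{conj:plov-upper-bound}, sharpened by one for odd $d$; compared with the dynamical-filtration route of \cite{LOZ}, your argument uses nothing beyond the projection formula for the automorphism and elementary coefficient bookkeeping, and it works verbatim in arbitrary characteristic.
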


This upper bound is optimal (see \cref{rmk:optimal}) and extends Keeler's upper bound $3d-2$ and Lin--Oguiso--Zhang's upper bound $3d-4$ when $k(f)=2$.
Note, however, that both of their upper bounds are for all general (even) $k(f)$.
During the preparation of this paper, we were informed by the authors of \cite{LOZ} that they also obtained \cref{thm:quadratic} using dynamical filtrations.
Based on our numerical calculations for small $d$ and $k(f)$ and the unimodality of restricted partition numbers, we ask the following.

\begin{question}
\label{conj:plov-upper-bound}
Let $f$ be an automorphism of a smooth projective variety $X$ of dimension $d$ over $\bk$.
Suppose that $f^*|_{\N^1(X)_\bQ} = \id + N$ is unipotent with $N^{k(f)}\neq 0$ and $N^{k(f)+1}=0$.
Then do we have that for any $0\leq i_1,\ldots,i_d \leq k(f)$ satisfying $\sum_{j=1}^d i_j > dk(f)/2$,
\[
N^{i_1}H_X \cdots N^{i_d} H_X = 0?
\]
\end{question}

Note that if \cref{conj:plov-upper-bound} is true, then by \cref{lemma:plov} one immediately obtains that $\plov(f) \leq d + dk(f)/2$, which in the complex case is further bounded by $d^2$ by \cite[Theorem~1.1]{DLOZ22}.
In a recent joint work \cite{HJ}, Chen Jiang and the author give an affirmative answer to \cref{conj:plov-upper-bound} and hence to \cite[Question~4.1]{CPR21} and \cite[Question~1.5 (1)]{LOZ} too.

\section{Endomorphism algebras of abelian varieties: Proof of Theorem \ref{thm:A}}
\label{sec:AV}


In this section, we will focus on the main object of this note -- abelian varieties.
We refer to Mumford's book \cite{Mumford} and Milne \cite{Milne86} for standard notations and terminologies on them.
For the convenience of the reader, we first give a brief review of abelian varieties, including: endomorphism algebras, characteristic polynomials, and the Riemann--Roch theorem.
We then prove \cref{thm:A} using some basic representation theory of semisimple algebras.

Let $X$ be an abelian variety of dimension $g$ over an algebraically closed field $\bk$.
The dual abelian variety $\Pic^\circ(X)$ of $X$ is denoted by $\what X$.
For any line bundle $\sL$ on $X$, there is an induced homomorphism
\[
\phi_\sL\colon X \lra \what X, \quad x \mapsto t_x^*\sL \otimes \sL^{-1}.
\]
We fix an ample divisor $H_X$ on $X$ so that $\phi\coloneqq \phi_{\sO_X(H_X)}$ is an isogeny.
 
Let $\End(X)$ be the endomorphism ring of $X$.
Denote by
\[
\End^\circ(X) \coloneqq \End(X)\otimes_\bZ \bQ
\]
the semisimple $\bQ$-algebra of endomorphisms of $X$.
The {\it Rosati involution} $^\dagger$ on $\End^\circ(X)$ (with respect to the fixed $H_X$) is defined as follows: for any $\alpha \in \End^\circ(X)$,
\[
\alpha^\dagger \coloneqq \phi^{-1}\circ \what \alpha \circ \phi \in \End^\circ(X),
\]
where $\what \alpha\in \End^\circ(\what X)$ is the dual endomorphism (with $\bQ$-coefficients) of the dual abelian variety $\what X$ (see \cite[\S 11]{Milne86}).
For any fixed prime $\ell \neq \Char(\bk)$, let
\[
T_\ell(X) \coloneqq \varprojlim_{n} X[\ell^n](\bk)
\]
denote the $\ell$-adic Tate module of $X$, which is a free $\bZ_\ell$-module of rank $2g$.
Call
\[
V_\ell(X) \coloneqq T_\ell(X) \otimes_{\bZ_\ell} \bQ_\ell
\]
the $\ell$-adic Tate space of $X$.
It is well known that there is an injective homomorphism
\[
T_\ell \colon \End(X) \otimes_\bZ \bZ_\ell \longinjmap \End_{\bZ_\ell}(T_\ell(X))
\]
of $\bZ_\ell$-algebras and an injective homomorphism
\begin{equation*}
V_\ell \colon \End^\circ(X) \otimes_\bQ \bQ_\ell \longinjmap \End_{\bQ_\ell}(V_\ell(X)) = \Mat_{2g}(\bQ_\ell)
\end{equation*}
of $\bQ_\ell$-algebras; see \cite[\S 19]{Mumford} or \cite[\S 12]{Milne86}.

For a homomorphism $\phi \colon X \to Y$ of abelian varieties, its {\it degree} $\deg \phi$ is defined to be the order of the kernel $\ker \phi$, if it is finite, and $0$ otherwise.
We can extend this definition to any $\alpha \in \End^\circ(X)$ by setting $\deg \alpha = n^{-2g} \deg(n\alpha)$, if $n\alpha \in \End(X)$ for some $n\in\bZ_{>0}$.

\begin{theorem}[{cf.~\cite[\S19, Theorem~4]{Mumford} and \cite[\S12, Proposition~12.9]{Milne86}}]
\label{thm:char-poly}
For any $\alpha \in \End^\circ(X)$, there exists a monic polynomial $P_\alpha(t) \in \bQ[t]$ of degree $2g$ such that $P_\alpha(n) = \deg([n]_X - \alpha)$ for any $n\in\bZ$.
Moreover, if $\alpha\in \End(X)$, then $P_\alpha(t)\in \bZ[t]$; $P_\alpha(t)$ is also equal to the characteristic polynomial $\det(t \, \mathrm{I}_{2g} - V_\ell(\alpha))$ of the induced map $V_\ell(\alpha)$ on $V_\ell(X)$.
\end{theorem}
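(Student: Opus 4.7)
The plan is to prove the theorem via the core identity
\[
\deg(\beta)=\det V_\ell(\beta)\quad\text{for every }\beta\in\End(X),
\]
from which both assertions follow by polynomial interpolation and rescaling.

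First, I would establish this identity by a case split. If $\beta$ is not an isogeny, its kernel contains a positive-dimensional abelian subvariety, whose $\ell$-adic Tate module embeds into $\ker V_\ell(\beta)$, so both sides vanish. If $\beta$ is an isogeny, I would argue via top \'etale cohomology: using the duality $H^1_{\et}(X,\bQ_\ell)\cong V_\ell(X)^\vee$ and the wedge product structure $H^{2g}_{\et}(X,\bQ_\ell)\cong\bigwedge^{2g}H^1_{\et}(X,\bQ_\ell)$, the pullback action $\beta^*$ on $H^{2g}_{\et}(X,\bQ_\ell)$ is, on the one hand, multiplication by $\deg(\beta)$ (since $\beta$ is finite flat of degree $\deg\beta$), and on the other hand equal to $\det(\beta^*|_{H^1_{\et}(X,\bQ_\ell)})=\det V_\ell(\beta)$ by the wedge-product formula. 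Comparing these yields the identity. An alternative, more hands-on route is the elementary-divisor theorem over the PID $\bZ_\ell$ combined with the snake lemma applied to multiplication by $\ell^n$ on $0\to\ker\beta\to X\xrightarrow{\beta}X\to 0$, which identifies the cokernel of $T_\ell(\beta)$ with the $\ell$-primary component of $\ker\beta$ and yields at least the $\ell$-adic valuation equality $v_\ell(\det V_\ell(\beta))=v_\ell(\deg\beta)$ for every $\ell\ne\Char(\bk)$.

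Applying the identity to $\beta=[n]_X-\alpha$, and setting $Q_\alpha^{(\ell)}(t)\coloneqq\det(t\,\mathrm{I}_{2g}-V_\ell(\alpha))\in\bQ_\ell[t]$ for the characteristic polynomial of $V_\ell(\alpha)$, one obtains
\[
Q_\alpha^{(\ell)}(n)=\det V_\ell([n]_X-\alpha)=\deg([n]_X-\alpha)\in\bZ\quad\text{for every }n\in\bZ.
\]
Since the right-hand side is independent of $\ell$ and a monic polynomial of degree $2g$ is determined by its values at any $2g+1$ distinct integers, $Q_\alpha^{(\ell)}$ is independent of $\ell$ and in fact lies in $\bQ[t]$; denote this common polynomial by $P_\alpha(t)$. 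Newton's forward-difference formula applied to the integer-valued function $n\mapsto P_\alpha(n)$ with leading coefficient $1$ then forces $P_\alpha\in\bZ[t]$, settling the $\alpha\in\End(X)$ case together with the moreover clause.

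Finally, for general $\alpha\in\End^\circ(X)$ I would pick $m\in\bZ_{>0}$ with $m\alpha\in\End(X)$ and define $P_\alpha(t)\coloneqq m^{-2g}\,P_{m\alpha}(mt)\in\bQ[t]$, which is monic of degree $2g$. The extension of the degree function by $\deg(\gamma)\coloneqq m^{-2g}\deg(m\gamma)$ then gives $P_\alpha(n)=\deg([n]_X-\alpha)$ for every $n\in\bZ$, and independence of the choice of $m$ is automatic from the polynomial-interpolation property. The chief obstacle is the exact identity $\deg(\beta)=\det V_\ell(\beta)$ in the second step, where the specific geometry of abelian varieties (notably the \'etale structure of $[\ell^n]_X$ for $\ell\ne\Char(\bk)$ and the interpretation of the degree as the action on the top cohomology) enters in an essential way; the remaining steps are routine interpolation and bookkeeping.
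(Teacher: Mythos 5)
The paper gives no proof of this statement (it is quoted from Mumford and Milne), so the comparison is with the standard references; your route --- proving $\deg\beta=\det V_\ell(\beta)$ for $\beta\in\End(X)$ via the action on $H^{2g}_{\et}=\bigwedge^{2g}H^1_{\et}$, interpolating the $\ell$-independent integer values $\deg([n]_X-\alpha)$ to get a single monic $P_\alpha\in\bQ[t]$ of degree $2g$, and rescaling for $\End^\circ(X)$ --- is essentially the classical argument, and those steps are sound. The non-isogeny case, the top-cohomology identity, the interpolation, and the passage to $\End^\circ(X)$ are all correct as written.

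There is, however, a genuine gap in the ``moreover'' clause: the claim that Newton's forward-difference formula together with monicity forces $P_\alpha\in\bZ[t]$ is false. Integer-valued polynomials are exactly the $\bZ$-span of the $\binom{t}{k}$, and monicity only pins down the top difference $\Delta^{2g}P_\alpha(0)=(2g)!$; it says nothing about the lower monomial coefficients. For instance
\[
t^3-\tfrac{5}{2}t^2+\tfrac{3}{2}t \;=\; 6\binom{t}{3}+\binom{t}{2}
\]
is monic and takes integer values on all of $\bZ$, but does not lie in $\bZ[t]$. So this step, as stated, does not prove integrality of the coefficients. The standard repair is to use that $\End(X)$ is a finitely generated torsion-free $\bZ$-module (Mumford, \S 19, Theorem~3), so $\alpha$ satisfies a monic polynomial $h\in\bZ[t]$; then $h(V_\ell(\alpha))=0$, every eigenvalue of $V_\ell(\alpha)$ is a root of $h$ and hence an algebraic integer, and the coefficients of $P_\alpha=\det(t\,\mathrm{I}_{2g}-V_\ell(\alpha))$ are symmetric functions of algebraic integers which you have already shown to be rational, hence lie in $\bZ$. (Arguing instead that $T_\ell(\alpha)$ preserves the lattice $T_\ell(X)$, so that $P_\alpha\in\bZ_\ell[t]\cap\bQ[t]$ for every $\ell\neq\Char(\bk)$, only yields coefficients in $\bZ[1/p]$ in characteristic $p>0$, so the integrality of $\alpha$ over $\bZ$ is genuinely needed at the prime $p$.)
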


\begin{definition}
\label{def:char-poly}
The above $P_\alpha(t)\in\bQ[t]$ is called the {\it characteristic polynomial} of $\alpha\in\End^\circ(X)$.
\end{definition}

The Riemann--Roch theorem for abelian varieties takes the following simple form.

\begin{theorem}[{cf.~\cite[\S 16]{Mumford}}]
\label{thm:RR}
For any line bundle $\sL = \sO_X(D)$ on $X$, we have
\[
\chi(\sL) = \frac{D^g}{g!} \quad \text{and} \quad \chi(\sL)^2 = \deg(\phi_{\sL}).
\]
\end{theorem}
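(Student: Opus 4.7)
The plan is to prove the two formulas separately, exploiting the distinctive feature of abelian varieties that the tangent bundle is trivial.

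For the first formula $\chi(\sL) = D^g/g!$, I would invoke the Hirzebruch--Riemann--Roch theorem (valid in arbitrary characteristic via Grothendieck's formulation for smooth projective varieties), which gives $\chi(\sL) = \int_X \ch(\sL) \cdot \mathrm{td}(T_X)$. Translation-invariant vector fields trivialize $T_X$, so $\mathrm{td}(T_X) = 1$, and only the top-degree component of $\ch(\sL) = e^{c_1(\sL)}$ contributes to the integral: $\chi(\sL) = \int_X c_1(\sL)^g/g! = D^g/g!$. Everything in this step is a one-line reduction once HRR is granted.

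For the second formula $\chi(\sL)^2 = \deg(\phi_{\sL})$, I would proceed in three stages. First, reduce to the case where $\phi_\sL$ is an isogeny: if $\ker(\phi_\sL)$ is positive-dimensional, then translation-invariance (via the see-saw principle applied to $m^*\sL \otimes p_1^*\sL^{-1} \otimes p_2^*\sL^{-1}$ on $X \times X$) forces every $H^i(X, \sL)$ to vanish, matching $\deg(\phi_\sL)=0$. Second, treat ample $\sL$: combine Mumford's vanishing theorem on abelian varieties (yielding $\chi(\sL) = h^0(X, \sL)$) with the theta group $\mathcal{G}(\sL)$, a central extension of $K(\sL) = \ker(\phi_\sL)$ by $\mathbf{G}_m$. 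An algebraic Stone--von Neumann-type statement then identifies $H^0(X, \sL)$ as the unique irreducible $\mathcal{G}(\sL)$-representation of weight one, of dimension $\sqrt{|K(\sL)|} = \sqrt{\deg(\phi_\sL)}$. Third, for general non-degenerate $\sL$, Mumford's index theorem concentrates the cohomology in a single degree $i(\sL)$, and the analogous theta-group argument applies; alternatively, twist by a suitable element of $\mathrm{Pic}^{\circ}(X)$ (which changes neither side) to move $\sL$ into a well-chosen numerical class.

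The main obstacle is the second stage, establishing the irreducibility of $H^0(X, \sL)$ as a $\mathcal{G}(\sL)$-module. This rests on an algebraic analogue of the Stone--von Neumann theorem for finite Heisenberg extensions, which in turn depends on a careful descent analysis for sections of $\sL$ along the $K(\sL)$-action. Once that is in place, the remaining pieces are formal consequences of the theorem of the cube, the see-saw principle, and the character-theoretic identity $\dim(V)^2 = |G/Z(G)|$ for faithful irreducibles of Heisenberg-type groups. Since the statement is cited directly from \cite[\S 16]{Mumford}, I would simply refer to that source rather than reproduce the argument.
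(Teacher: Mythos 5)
The paper offers no proof of this statement: it is imported verbatim from Mumford \cite[\S 16]{Mumford} and used as a black box (via \eqref{eq:RR} in the proof of \cref{thm:B}), and since you also end by deferring to that citation there is no real divergence to adjudicate at the level of the paper. As a proof sketch, however, your first reduction step is wrong as stated: if $\ker(\phi_{\sL})$ is positive-dimensional it does \emph{not} follow that every $H^i(X,\sL)$ vanishes --- take $\sL=\sO_X$, where $\phi_{\sL}=0$ but $h^0(X,\sO_X)=1$. What you need (and what is true) is only $\chi(\sL)=0$, and this requires the standard dichotomy: put $Y=(\ker\phi_{\sL})^\circ_{\reduced}$, note $\sL|_Y\in\Pic^\circ(Y)$, and separate the case $\sL|_Y$ nontrivial (where all cohomology really does vanish) from the case $\sL|_Y$ trivial (where $\sL$ descends, up to a twist by $\Pic^\circ(X)$, along $X\to X/Y$ and $\chi(\sL)=0$ because $\chi(\sO_Y)=(1-1)^{\dim Y}=0$).

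A second caveat, relevant because the paper insists on arbitrary characteristic: the theta-group/Stone--von Neumann route for nondegenerate $\sL$ must be run scheme-theoretically, since $K(\sL)=\ker\phi_{\sL}$ can be non-reduced when $\phi_{\sL}$ is inseparable; $\sqrt{|K(\sL)|}$ then has to mean the order of the finite group scheme, and the character-theoretic identity $\dim(V)^2=|G/Z(G)|$ for abstract finite Heisenberg groups does not suffice. Mumford's own argument in \S 16 sidesteps both issues and is shorter: he computes $\chi$ of the bundle $\Lambda(\sL)=m^*\sL\otimes p_1^*\sL^{-1}\otimes p_2^*\sL^{-1}\cong(1\times\phi_{\sL})^*\cP$ in two ways, using $\chi(\cP)=(-1)^g$ for the Poincar\'e bundle, to obtain $\chi(\sL)^2=\deg(\phi_{\sL})$ first, and then deduces $\chi(\sL)=D^g/g!$ from $\chi(\sL^n)=n^g\chi(\sL)$ together with the polynomial behavior of $n\mapsto\chi(\sL^n)$, rather than from Hirzebruch--Riemann--Roch. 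Your HRR derivation of the first formula is correct on its own, but as part of a self-contained proof the two points above would need to be repaired.
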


We now describe the structure theorem of endomorphism algebras.
By Poincar\'e's complete reducibility theorem (see, e.g., \cite[\S19, Theorem~1]{Mumford}), $X$ is isogenous to a product $X_1 \times \cdots \times X_s$, where each $X_i = A_i^{n_i}$ is isotypic, the $A_i$ are mutually non-isogenous simple abelian varieties, and
\[
\End^\circ(X) \isom \prod_{i=1}^{s} \End^\circ(X_i) \isom \prod_{i=1}^{s} \Mat_{n_i}(\End^\circ(A_i)).
\]
So it suffices to consider a simple abelian variety $A$ over $\bk$.
In this case, $D\coloneqq \End^\circ(A)$ is a division algebra.
Let $K$ denote the center of $D$ which is a field and $K_0$ the maximal totally real subfield of $K$. Set
\begin{equation}
\label{eq:ed}
d^2 = [D:K], \ e = [K:\bQ], \ \text{ and } \ e_0 = [K_0:\bQ].
\end{equation}
Below is Albert's classification of the endomorphism $\bQ$-algebras of simple abelian varieties (see, e.g., \cite[\S21, Application~I, Theorem~2]{Mumford}):
\begin{enumerate}[label=,itemindent=-2em]
\item Type I$(e)$: $d=1$, $e=e_0$ and $D=K=K_0$ is a totally real number field.
\item Type II$(e)$: $d=2$, $e=e_0$, $D$ is an indefinite quaternion division algebra over a totally real number field $K=K_0$ such that $D$ splits at each real place of $K$.
\item Type III$(e)$: $d=2$, $e=e_0$, $D$ is a definite quaternion division algebra over a totally real number field $K=K_0$ such that $D$ does not split at any real place of $K$.
\item Type IV$(e_0, d)$: $e=2e_0$ and $D$ is a division algebra over a CM-field $K \supsetneq K_0$.
In this case, there is an isomorphism $D\otimes_\bQ \bR \isom \bigoplus_{i=1}^{e_0} \Mat_{d}(\bC)$.
\end{enumerate}

Now, let us study representations of the semisimple $\bQ$-algebra $\End^\circ(X)$ of endomorphisms of an abelian variety $X$.
For simplicity, let us first consider the case when
\[
X = A^n
\]
is a power of a simple abelian variety $A$ over $\bk$.
Then $\End^\circ(X) = \Mat_n(D)$ is a simple $\bQ$-algebra and $D\coloneqq \End^\circ(A)$ is a division algebra.
We adopt the notation \eqref{eq:ed}.
Let $V_1, \ldots, V_e$ denote the $e$ nonisomorphic irreducible representations of $\End^\circ(X)$ over $\ol{\bQ}$, where each one has degree $dn$.
We then call $V^{\reduced} \coloneqq \bigoplus_{i=1}^{e} V_i$ the {\it reduced representation} of $\End^\circ(X)$.
In particular, for any $\alpha\in \End^\circ(X)$, one has that
\begin{equation}
\label{eq:splitting}
\alpha \otimes_\bQ 1_{\ol{\bQ}} \in \End^\circ(X) \otimes_\bQ \ol{\bQ} = \Mat_{n}(D)\otimes_\bQ \ol{\bQ} \xrightarrow[\ \ \psi\ \ ]{~\isom~} \bigoplus_{i=1}^{e} \Mat_{dn}(\ol\bQ)
\end{equation}
and
\[
\psi(\alpha \otimes_\bQ 1_{\ol\bQ}) \sim \alpha|_{V^{\reduced}}.
\]
On the other hand, the $2g$-dimensional $\ell$-adic Tate space $V_\ell(X)$ is a faithful representation of $\End^\circ(X)\otimes_\bQ \bQ_\ell$.
We thus obtain the following.

\begin{proposition}[{cf.~\cite[Proposition~12.12]{Milne86}}]
\label{prop:Milne}
With notation as above, there is an isomorphism of representations of the semisimple $\ol{\bQ_\ell}$-algebra $\End^\circ(X)\otimes_{\bQ} \ol{\bQ_\ell}$:
\[
V_\ell(X) \otimes_{\bQ_\ell} \ol{\bQ_\ell} \isom (V^{\reduced})^{\oplus m} \otimes_{\ol{\bQ}} \ol{\bQ}_\ell,
\]
where $m \coloneqq 2g/(edn)$.
In particular, for any $\alpha\in \End^\circ(X)$, $V_\ell(\alpha)$ is similar to $(\alpha|_{V^{\reduced}})^{\oplus m}$.
\end{proposition}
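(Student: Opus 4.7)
The plan is to decompose $V_\ell(X) \otimes_{\bQ_\ell} \ol{\bQ_\ell}$ as a module over the split semisimple $\ol{\bQ_\ell}$-algebra $\End^\circ(X) \otimes_\bQ \ol{\bQ_\ell}$, and then to pin down the multiplicities by exploiting the rationality (hence Galois-invariance) of the characteristic polynomial in \cref{thm:char-poly}.

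To begin, tensoring the splitting \eqref{eq:splitting} with $\ol{\bQ_\ell}$ over $\ol{\bQ}$ gives an isomorphism of $\ol{\bQ_\ell}$-algebras
\[
\End^\circ(X) \otimes_\bQ \ol{\bQ_\ell} \isom \bigoplus_{i=1}^{e} \Mat_{dn}(\ol{\bQ_\ell}),
\]
whose simple modules are $W_i \coloneqq V_i \otimes_{\ol{\bQ}} \ol{\bQ_\ell}$ for $1\leq i\leq e$. Since the $\bQ_\ell$-algebra homomorphism $V_\ell$ is injective, so is its scalar extension to $\ol{\bQ_\ell}$; thus $V_\ell(X) \otimes_{\bQ_\ell} \ol{\bQ_\ell}$ is a faithful finite-dimensional module over the above product algebra, forcing a decomposition
\[
V_\ell(X) \otimes_{\bQ_\ell} \ol{\bQ_\ell} \isom \bigoplus_{i=1}^{e} W_i^{\oplus m_i}
\]
with every $m_i \geq 1$.

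The crux is to show that $m_1=\cdots=m_e$. Since $K=Z(D)$ is a number field of degree $e$ over $\bQ$, the absolute Galois group $\Gal(\ol{\bQ}/\bQ)$ acts transitively on the $e$ embeddings $K \injmap \ol{\bQ}$, hence transitively on the $e$ simple factors of $\End^\circ(X)\otimes_\bQ \ol{\bQ}$ and on the irreducible representations $V_1, \ldots, V_e$. Now fix any $\alpha\in\End^\circ(X)$ and let $f_i(t) \in \ol{\bQ}[t]$ denote the characteristic polynomial of $\alpha$ acting on $V_i$; the above decomposition then yields
\[
P_\alpha(t) = \det(t\,\mathrm{I}_{2g} - V_\ell(\alpha)) = \prod_{i=1}^{e} f_i(t)^{m_i}.
\]
By \cref{thm:char-poly}, $P_\alpha(t) \in \bQ[t]$ and is thus $\Gal(\ol{\bQ}/\bQ)$-invariant; combined with the transitivity of the Galois action permuting the $f_i(t)$, this forces all $m_i$ to coincide with a common value $m$. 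A dimension count $2g=emdn$ then yields $m=2g/(edn)$, establishing the asserted isomorphism. The final similarity statement $V_\ell(\alpha)\sim(\alpha|_{V^{\reduced}})^{\oplus m}$ is immediate, as the isomorphism above is one of $\End^\circ(X)\otimes_\bQ \ol{\bQ_\ell}$-modules.

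The main subtlety to verify is the Galois-transitivity of the action on the $V_i$ (equivalently, on the simple factors after scalar extension), together with a clean identification of the $i$-th factor representation throughout the two scalar extensions $\ol{\bQ} \rightsquigarrow \ol{\bQ_\ell}$; once these identifications are in hand, the argument reduces to a standard application of semisimple representation theory with the rationality input from \cref{thm:char-poly}.
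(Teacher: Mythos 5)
The paper offers no proof of this proposition --- it simply cites Milne \cite[Proposition~12.12]{Milne86} --- so your argument should be measured against the standard proof of that result, which your strategy essentially reproduces: split $\End^\circ(X)\otimes_\bQ\ol{\bQ_\ell}$ into $e$ matrix factors, decompose $V_\ell(X)\otimes_{\bQ_\ell}\ol{\bQ_\ell}$ into the simple modules $W_i$ with multiplicities $m_i$, and use the $\bQ$-rationality of the characteristic polynomial from \cref{thm:char-poly} together with the transitive Galois permutation of the factors to equalize the $m_i$. The set-up (the splitting, faithfulness giving $m_i\geq 1$, the concluding dimension count $2g=emdn$) is correct, and invoking the $\bQ$-rationality of $P_\alpha$ rather than mere $\bQ_\ell$-rationality is the right move, since $\Gal(\ol{\bQ_\ell}/\bQ_\ell)$ alone need not act transitively on the factors when $\ell$ splits in $K$.

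There is, however, a genuine gap at the key step. You ``fix any $\alpha$'' and assert that $P_\alpha(t)=\prod_{i=1}^{e} f_i(t)^{m_i}\in\bQ[t]$, combined with Galois transitivity on the $f_i$, forces $m_1=\cdots=m_e$. For an arbitrary single $\alpha$ this implication is false: taking $\alpha=\id_X$ gives $f_i(t)=(t-1)^{dn}$ for every $i$, so $P_\alpha(t)=(t-1)^{2g}$ carries no information about the individual $m_i$; more generally, the factorization $\prod_i f_i^{m_i}$ does not determine the exponents when the $f_i$ fail to be coprime. The argument must be run either (a) with a well-chosen $\alpha$, e.g.\ a primitive element $\gamma$ of the center $K$ (which sits inside $\End^\circ(X)=\Mat_n(D)$ as a central element), for which $f_i(t)=(t-\sigma_i(\gamma))^{dn}$ with the $e$ conjugates $\sigma_i(\gamma)$ pairwise distinct and transitively permuted by $\Gal(\ol{\bQ}/\bQ)$, so that rationality of $P_\gamma$ forces the exponents $dn\,m_i$ to be constant in $i$; or (b) uniformly in $\alpha$, using $\tr V_\ell(\alpha)=\sum_i m_i\,\tr(\alpha|_{V_i})\in\bQ$ for \emph{all} $\alpha\in\End^\circ(X)$ together with the linear independence of the characters of the pairwise non-isomorphic $V_i$. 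Either repair is a line or two, but as written the multiplicity step does not go through.
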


So in order to describe the Jordan canonical form of $V_\ell(\alpha)$, we just need to know the one of $\alpha|_{V^{\reduced}}$, or equivalently, of $\psi(\alpha \otimes_\bQ 1_{\ol\bQ})$ for any arbitrary $\psi$ by the Skolem--Noether theorem.
The following two lemmas take care of the crucial cases when $D$ is of Type III and Type IV, respectively.

\begin{lemma}
\label{lemma:Type-III}
Suppose that $D=\End^\circ(A)$ is of Type III$(e)$.
Then $\alpha|_{V^{\reduced}}$ admits a pseudo-analytic decomposition (see \cref{def:pseudo-analytic}) for any $\alpha\in \End^\circ(X)$,
i.e., $\alpha|_{V^{\reduced}}$ is similar to $J\oplus \ol{J}$, where $J\in \Mat_{en}(\ol\bQ)$ is a direct sum of some standard Jordan blocks and $\ol{J}$ is its complex conjugate.
\end{lemma}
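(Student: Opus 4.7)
The plan is to descend to the complex numbers and exploit the quaternionic structure forced by Type III. First, I would invoke the standard fact that, since $K=K_0$ is totally real and $D$ is a definite quaternion algebra over $K$, the real completions $D\otimes_{K,\sigma_i}\bR\isom\bH$ at each of the $e$ real embeddings $\sigma_1,\ldots,\sigma_e$ of $K$ yield
\[
\End^\circ(X)\otimes_\bQ\bC = \Mat_n(D)\otimes_\bQ\bC \isom \bigoplus_{i=1}^e\Mat_{2n}(\bC),
\]
compatibly with the splitting $\End^\circ(X)\otimes_\bQ\ol\bQ\isom\bigoplus_{i=1}^e\Mat_{2n}(\ol\bQ)$ of the reduced representation (after fixing an embedding $\ol\bQ\hookrightarrow\bC$). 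Because each $\sigma_i$ lands in $\bR$, complex conjugation on $\ol\bQ$ acts within each factor without permuting them.

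Next, I would make the conjugation explicit through the standard embedding $\rho\colon\Mat_n(\bH)\injmap\Mat_{2n}(\bC)$, whose image is cut out by the relation $M=J_n\ol M J_n^{-1}$, with $J_n=\bigl(\begin{smallmatrix} 0 & I_n\\ -I_n & 0\end{smallmatrix}\bigr)$. Writing the image of $\alpha\in\End^\circ(X)$ as $(\beta_1,\ldots,\beta_e)\in\bigoplus_{i=1}^e\Mat_{2n}(\bC)$, the fact that $\alpha\otimes 1$ lies in $\End^\circ(X)\otimes_\bQ\bR$ forces $\beta_i=J_n\ol{\beta_i}J_n^{-1}$ for every $i$. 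In particular $\beta_i\sim\ol{\beta_i}$, so the multiset of Jordan blocks of $\beta_i$ is invariant under complex conjugation of eigenvalues; for any non-real eigenvalue $\lambda$ this already pairs each block $J_{\lambda,k}$ with a block $J_{\ol\lambda,k}$ of the same size.

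The main obstacle will be real eigenvalues, where $\lambda=\ol\lambda$ gives no pairing for free yet we still need the blocks to come in pairs of equal size. To handle this, I would reinterpret the identity $\beta_i=J_n\ol{\beta_i}J_n^{-1}$ as the statement that $\beta_i$ commutes with the $\bC$-antilinear map $\sigma\colon\bC^{2n}\to\bC^{2n}$, $v\mapsto J_n\ol v$, which satisfies $\sigma^2=-I$. For $\lambda\in\bR$ the generalized eigenspace $V_\lambda$ is $\sigma$-stable, and $\sigma$ commutes with the nilpotent $\tau\coloneqq(\beta_i-\lambda I)|_{V_\lambda}$. Hence $\sigma$ descends to every successive quotient $\ker\tau^k/\ker\tau^{k-1}$ of the kernel filtration, still squaring to $-I$, and thereby endows each such quotient with a quaternionic structure, forcing its complex dimension to be even. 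Since this dimension counts the Jordan blocks of $\tau$ of size $\geq k$, every such count is even, and so is the number of Jordan blocks of each exact size; thus the blocks at any real eigenvalue pair up into equal-size pairs as well.

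Combining both cases and using that the eigenvalues of $\beta_i$ are algebraic (since $\beta_i\in\Mat_{2n}(\ol\bQ)$), the Jordan canonical form of each $\beta_i$ takes the shape $J_i\oplus\ol{J_i}$ with $J_i\in\Mat_n(\ol\bQ)$. Summing over $i$ finally gives
\[
\alpha|_{V^{\reduced}}\sim\bigoplus_{i=1}^e(J_i\oplus\ol{J_i})\sim J\oplus\ol J,\qquad J\coloneqq\bigoplus_{i=1}^e J_i\in\Mat_{en}(\ol\bQ),
\]
which is the desired pseudo-analytic decomposition.
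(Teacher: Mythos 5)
Your proof is correct and follows the same skeleton as the paper's: both arguments split $\Mat_n(D)\otimes_\bQ\bR$ over the $e$ real places of the totally real center $K$ and then exploit the quaternionic structure of the image of $\Mat_n(\bH)$ inside $\Mat_{2n}(\bC)$, concluding via Skolem--Noether that the Jordan form is independent of the chosen splitting. The difference lies in how the key linear-algebra input is handled: the paper simply cites Rodman's theorem that a complex matrix of the form $\bigl(\begin{smallmatrix} M_1 & M_2\\ -\ol{M_2} & \ol{M_1}\end{smallmatrix}\bigr)$ has Jordan canonical form $J\oplus\ol{J}$, whereas you prove this fact from scratch. Your route --- rewriting the quaternionic condition as commutation with the antilinear map $\sigma(v)=J_n\ol{v}$ satisfying $\sigma^2=-I$, pairing Jordan blocks at non-real eigenvalues via $\beta_i\sim\ol{\beta_i}$, and forcing even block multiplicities at real eigenvalues by descending $\sigma$ to the quotients $\ker\tau^k/\ker\tau^{k-1}$ of the kernel filtration (each of which thereby acquires a quaternionic structure and hence even complex dimension) --- is a complete and correct proof of the cited result. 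What your version buys is self-containedness and a transparent explanation of \emph{why} the pairing persists at real eigenvalues, at the cost of some length; otherwise the two arguments are interchangeable.
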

\begin{proof}
By assumption, $D$ is a definite quaternion division algebra $\bH$ over a totally real number field $K=K_0$.
As usual, denote by $\{1, \bbi, \bbj, \bbk\}$ the basis of $\bH$.
We embed $\bH$ into $\bH \otimes_K \ol\bQ \isom \Mat_2(\ol\bQ)$ in a standard way.
This further induces the following injective homomorphism of $K$-algebras:
\[
\begin{array}{ccccc}
\Mat_{n}(\bH) & \longinjmap & \Mat_{n}(\bH) \otimes_K \ol\bQ & \xrightarrow{\ \ \isom\ \ } & \Mat_{2n}(\ol\bQ) \\[2pt]
M = M_1 + M_2 \, \bbj & \longmapsto & M \otimes_K 1_{\ol\bQ} & \longmapsto & 
\begin{pmatrix}
M_1 & M_2 \\
-\ol{M_2} & \ol{M_1}
\end{pmatrix}.
\end{array}
\]
We note that any complex matrix in $\Mat_{2n}(\bC)$ of the above form has a Jordan canonical form $J\oplus \ol{J}$ (see, e.g., \cite[Theorem~5.7.1]{Rodman14}).
On the other hand, it is easy to see that
\[
\Mat_{n}(\bH) \otimes_\bQ \ol\bQ = \Mat_{n}(\bH) \otimes_K (K\otimes_\bQ \ol\bQ) \isom \bigoplus_{i=1}^{e} \Mat_{n}(\bH) \otimes_K \ol\bQ \xrightarrow[]{\ \ \isom \ \ } \bigoplus_{i=1}^{e} \Mat_{2n}(\ol\bQ).
\]
As in Formula~\eqref{eq:splitting}, we denote the above composite isomorphism by $\psi$.
We thus obtain that $\psi(\alpha \otimes_\bQ 1_{\ol{\bQ}})$ admits a pseudo-analytic decomposition.
\cref{lemma:Type-III} follows from the Skolem--Noether theorem.
\end{proof}

\begin{lemma}
\label{lemma:Type-IV}
Suppose that $D=\End^\circ(A)$ is of Type IV$(e_0,d)$.
Then $\alpha|_{V^{\reduced}}$ admits a pseudo-analytic decomposition (see \cref{def:pseudo-analytic}) for any $\alpha\in \End^\circ(X)$,
i.e., $\alpha|_{V^{\reduced}}$ is similar to $J\oplus \ol{J}$, where $J\in \Mat_{e_0dn}(\bC)$ is a direct sum of some standard Jordan blocks and $\ol{J}$ is its complex conjugate.
\end{lemma}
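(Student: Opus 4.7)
The plan mirrors the Type III case but replaces the role of the quaternion conjugation with the complex conjugation on the CM center. Let $K$ be the center of $D$ and $K_0$ its maximal totally real subfield; the CM structure provides a nontrivial involution $c\colon K \to K$ fixing $K_0$, and $c$ permutes the $2e_0$ embeddings $\sigma\colon K \hookrightarrow \ol\bQ$ in pairs $\{\sigma, \bar\sigma\}$ with $\bar\sigma = c \circ \sigma$. I would first record the splitting
\[
\End^\circ(X) \otimes_\bQ \ol\bQ \;=\; \Mat_n(D) \otimes_K (K \otimes_\bQ \ol\bQ) \;\isom\; \bigoplus_{\sigma} \Mat_n\bigl(D \otimes_{K,\sigma} \ol\bQ\bigr) \;\isom\; \bigoplus_{\sigma} \Mat_{dn}(\ol\bQ),
\]
so that, by \cref{prop:Milne}, $\alpha|_{V^{\reduced}}$ is similar to $\bigoplus_\sigma M_\sigma(\alpha)$, where $M_\sigma(\alpha) \in \Mat_{dn}(\ol\bQ)$ denotes the image of $\alpha$ in the $\sigma$-th component.

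Next, I would upgrade this decomposition so that the two summands in each pair are identified via entrywise complex conjugation. The essential input is that $\alpha \otimes 1$ is $\bQ$-rational, hence fixed by the natural semi-linear $\Gal(\ol\bQ/\bQ)$-action on the right-hand side; restricting to the action of complex conjugation $c_\bC$ and unwinding its effect on $K \otimes_\bQ \ol\bQ \isom \bigoplus_\sigma \ol\bQ$ (where it acts by $(t_\sigma)_\sigma \mapsto (\ol{t_{\bar\sigma}})_\sigma$, as one checks on elementary tensors $k \otimes t$), I would choose the isomorphisms $D \otimes_{K,\sigma} \ol\bQ \isom \Mat_d(\ol\bQ)$ compatibly so that $M_{\bar\sigma}(\alpha) = \ol{M_\sigma(\alpha)}$ for every $\alpha \in \End^\circ(X)$. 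Picking one embedding from each pair and setting $J \coloneqq \bigoplus_{i=1}^{e_0} M_{\sigma_i}(\alpha)$ then yields $\alpha|_{V^{\reduced}} \sim J \oplus \ol J$, whereupon replacing $J$ by its Jordan canonical form (whose complex conjugate is the Jordan canonical form of $\ol J$) produces the required decomposition into standard Jordan blocks.

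The principal obstacle is the compatible choice of the isomorphisms $\phi_\sigma \colon D \otimes_{K,\sigma} \ol\bQ \isom \Mat_d(\ol\bQ)$ realizing $\phi_{\bar\sigma}(\alpha \otimes 1) = \ol{\phi_\sigma(\alpha \otimes 1)}$ simultaneously for every pair, i.e., ensuring that no Brauer-class obstruction intervenes between the $\sigma$-th and $\bar\sigma$-th localizations of $D$. Because Type IV is precisely the case where the Rosati involution on $D$ restricts to complex conjugation on $K$ (an involution of the second kind), the required symmetry between $\sigma$ and $\bar\sigma$ is built into the algebra; any residual ambiguity can be absorbed by the Skolem--Noether theorem, which is harmless because pseudo-analyticity is a similarity-invariant property and $\alpha|_{V^{\reduced}}$ is only determined up to conjugation in $\GL_{2e_0 dn}(\ol\bQ)$.
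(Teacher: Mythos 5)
Your argument is correct, but it reaches the conclusion by a genuinely different mechanism than the paper's proof. You split $\End^\circ(X)\otimes_\bQ\ol\bQ$ over the $2e_0$ embeddings of the CM center $K$ and use the semilinear action of complex conjugation (a legitimate element of $\Gal(\ol\bQ/\bQ)$, since $\ol\bQ/\bQ$ is normal) on the $\bQ$-rational element $\alpha\otimes 1$ to pair the $\sigma$- and $\bar\sigma$-components; the only inputs beyond Skolem--Noether are your (correct) formula for how conjugation permutes the factors of $K\otimes_\bQ\ol\bQ$ and the fact that $K$, being totally imaginary, has no embedding fixed by conjugation, so the components really do fall into $e_0$ conjugate pairs. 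The paper instead tensors with $\bR$ first: $K_0\otimes_\bQ\bR\isom\bR^{e_0}$ and $D\otimes_{K_0}\bR\isom D\otimes_K\bC\isom\Mat_d(\bC)$ give $\Mat_n(D)\otimes_\bQ\bR\isom\bigoplus_{i=1}^{e_0}\Mat_{dn}(\bC)$ as $\bR$-algebras, and the pairing $M\leftrightarrow\ol M$ is then extracted, after complexifying, from the explicit similarity between the realification $\iota(M)\in\Mat_{2dn}(\bR)$ and $M\oplus\ol M$. These are two faces of the same phenomenon (each real place of $K_0$ underlies exactly one conjugate pair of embeddings of $K$), but your route stays over $\ol\bQ$ throughout, which makes the algebraicity of $J$ transparent and avoids the auxiliary embedding $\iota$, whereas the paper's is more matrix-explicit and needs no descent language. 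Two small points: your worry about a ``Brauer-class obstruction'' is vacuous over the algebraically closed field $\ol\bQ$, where every $D\otimes_{K,\sigma}\ol\bQ$ is isomorphic to $\Mat_d(\ol\bQ)$, and, as you yourself note, even an incompatible choice of splittings only perturbs $M_{\bar\sigma}(\alpha)$ by an inner automorphism, which is harmless because only the Jordan form of $\bigoplus_\sigma M_\sigma(\alpha)$ matters; and for $\sigma\colon K\hookrightarrow\ol\bQ$ with $c\colon K\to K$ the CM involution the composite should read $\sigma\circ c$ rather than $c\circ\sigma$, which for a CM field agrees with post-composing $\sigma$ with complex conjugation on $\ol\bQ$, as your argument requires.
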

\begin{proof}
By assumption, $D$ is a central division algebra over a CM-field $K \supsetneq K_0$.
In this case, we have
\[
D \otimes_{K_0} \bR = D \otimes_{K} (K \otimes_{K_0} \bR) \isom D \otimes_K \bC \xrightarrow[]{\ \isom\ } \Mat_d(\bC).
\]
It thus follows that
\begin{equation}
\label{eq:IV-C/R}
\Mat_n(D) \otimes_\bQ \bR = \Mat_n(D) \otimes_{K_0} (K_0 \otimes_\bQ \bR) \isom \bigoplus_{i=1}^{e_0} \, \Mat_n(D) \otimes_{K_0} \bR \xrightarrow[]{\ \isom\ } \bigoplus_{i=1}^{e_0} \Mat_{dn}(\bC),
\end{equation}
and
\[
\Mat_n(D) \otimes_\bQ \bC = (\Mat_n(D) \otimes_\bQ \bR) \otimes_\bR \bC \xrightarrow[\Psi]{\ \ \isom \ \ } \bigoplus_{i=1}^{e_0} \Mat_{dn}(\bC) \otimes_\bR \bC.
\]
Note that in Formula~\eqref{eq:IV-C/R} we (have to) think of the matrix algebra $\Mat_{dn}(\bC)$ of complex matrices as a semisimple $\bR$-algebra.
We therefore embed $\Mat_{dn}(\bC)$ into $\Mat_{2dn}(\bR)$ in the standard way:
\[
\begin{array}{cccc}
\iota\colon \Mat_{dn}(\bC) & \longinjmap & \Mat_{2dn}(\bR) \\
M & \longmapsto & 
\begin{pmatrix}
\re M & \im M \\
-\im M & \re M
\end{pmatrix},
\end{array}
\]
so that there is an embedding $\Mat_{dn}(\bC) \otimes_\bR \bC \longinjmap \Mat_{2dn}(\bC)$ of $\bC$-algebras, still denoted by $\iota$.
In general, one can check that $\iota(M)$ is similar to the block diagonal matrix $M\oplus \ol{M}$ for any $M\in \Mat_{dn}(\bC)$; see, e.g., \cite[Proof of Lemma 3.5 (3)]{Hu19}.
Putting all together, we have shown that $\iota\circ\Psi(\alpha \otimes_\bQ 1_{\bC})$ is similar to a direct sum of a matrix $\bigoplus_{i=1}^{e_0} M_i$ and its complex conjugate $\bigoplus_{i=1}^{e_0} \ol{M_i}$, where each $M_i \in \Mat_{dn}(\bC)$.
Therefore, $\iota\circ\Psi(\alpha \otimes_\bQ 1_{\bC})$ admits a pseudo-analytic decomposition.
By the Skolem--Noether theorem, $\psi(\alpha \otimes_\bQ 1_{\ol\bQ})$ is similar to $\iota\circ\Psi(\alpha \otimes_\bQ 1_{\bC})$ for any $\psi\colon \End^\circ(X) \otimes_\bQ \ol\bQ \xrightarrow[]{\ \isom \ } \bigoplus_{i=1}^{e} \Mat_{dn}(\ol\bQ)$.
\cref{lemma:Type-IV} thus follows.
\end{proof}

\begin{proof}[Proof of Theorem \ref{thm:A}]
By Poincar\'e's complete reducibility theorem (see, e.g., \cite[\S19, Theorem~1]{Mumford}), we may assume that $X$ is isotypic, i.e., isogenous to $A^n$ for some simple abelian variety $A$ over $\bk$.
Let $D$ denote the division algebra $\End^\circ(A)$, $K$ the center of $D$, and $K_0$ the maximal totally real subfield of $K$. 
As usual, set
\begin{equation*}
d^2 = [D:K], \ e = [K:\bQ], \ \text{ and } \ e_0 = [K_0:\bQ].
\end{equation*}
By \cref{prop:Milne}, it suffices to show that $(\alpha|_{V^{\reduced}})^{\oplus m}$ admits a pseudo-analytic decomposition (see \cref{def:pseudo-analytic}), where $m \coloneqq 2g/(edn)$.

Suppose that $D$ is of Type I$(e)$ or Type II$(e)$.
Then we have that
\[
\End^\circ(X) \otimes_\bQ \bR = \Mat_n(D) \otimes_\bQ \bR \xrightarrow[\psi]{\ \ \isom\ \ } \bigoplus_{i=1}^{e} \Mat_{dn}(\bR).
\]
Hence $\alpha|_{V^{\reduced}}$ is similar to a real matrix $\psi(\alpha\otimes_\bQ 1_{\bR})$ by the Skolem--Noether theorem.
Note also that in these two cases $m=2g/(edn)$ is an even number.
It follows readily that $(\alpha|_{V^{\reduced}})^{\oplus m}$ admits a pseudo-analytic decomposition.

The case when $D$ is of Type III$(e)$ or Type IV$(e_0,d)$ has been proved in \cref{lemma:Type-III,lemma:Type-IV}.
We thus complete the proof of \cref{thm:A}.
\end{proof}


\section{Proofs of Theorems \ref{thm:B}, \ref{thm:C} and \ref{thm:D}}


We shall prove our main results in the order of \cref{thm:C}, \cref{thm:B}, and \cref{thm:D}.

\begin{proof}[Proof of Theorem \ref{thm:C}]
The equivalence Statement~\eqref{thm:C-1} $\Longleftrightarrow$ Statement~\eqref{thm:C-4} follows from the fact that $V_\ell$ is an injective homomorphism of algebras.
Let $P_\alpha(t)\coloneqq \det(t \, \mathrm{I}_{2g} - V_\ell(\alpha)) \in \bZ[t]$ denote the characteristic polynomial of $\alpha$ (see \cref{def:char-poly}).
Since $H^2_{\et}(X, \bQ_\ell) = \wedge^2 H^1_{\et}(X, \bQ_\ell)$ and the wedge product is preserved by $\alpha^*$, we have that
\begin{align*}
\textrm{Statement~\eqref{thm:C-4}} &\Longleftrightarrow \textrm{all roots of $P_\alpha(t)$ are roots of unity} \\
&\Longleftrightarrow \textrm{all eigenvalues of $\alpha^*|_{H_{\et}^1(X,\bQ_\ell)}$ are roots of unity} \\
&\Longleftrightarrow \textrm{all eigenvalues of $\alpha^*|_{H_{\et}^2(X,\bQ_\ell)}$ are roots of unity} \\
&\Longleftrightarrow \textrm{Statement~\eqref{thm:C-3}},
\end{align*}
where the third implication $\Longleftarrow$ follows from \cref{thm:A}, the fact that for any $\alpha\in \End(X)$ all eigenvalues of $\alpha^*|_{H_{\et}^i(X,\bQ_\ell)}$ are algebraic integers, and Kronecker's theorem.

Finally, we also note that, a priori, for any $\alpha\in \End(X)$, all eigenvalues of $\alpha^*|_{H_{\et}^2(X,\bQ_\ell)}$ and $\alpha^*|_{\N^1(X)_\bQ}$ are algebraic integers since their characteristic polynomials are monic and have integer coefficients.
Further, $\alpha$ is surjective if and only if any one of these induced actions is invertible.
Hence the equivalence Statement~\eqref{thm:C-2} $\Longleftrightarrow$ Statement~\eqref{thm:C-3} follows from \cite[Theorem~1.2]{Hu19} (or \cite[Theorem~1.9]{Hu-lc}) and Kronecker's theorem.
\end{proof}

\begin{remark}
\label{rmk:C-End0}
Instead, let us assume $\alpha\in \End^\circ(X)$ in \cref{thm:C}.
Then it is not hard to have the following implications for the corresponding statements:
\[
\begin{tikzcd}
\textrm{Statement~\eqref{thm:C-1}} \arrow[r, Rightarrow] \arrow[d, Leftrightarrow] & \textrm{Statement~\eqref{thm:C-2}} \arrow[dl, Rightarrow, "?"']                      \\
\textrm{Statement~\eqref{thm:C-4}} \arrow[r, Rightarrow]                       & \textrm{Statement~\eqref{thm:C-3}}. \arrow[u, Rightarrow]
\end{tikzcd}
\]
However, we do not know whether Statement~\eqref{thm:C-2} $\Longrightarrow$ Statement~\eqref{thm:C-4} is true.
In fact, suppose that $n\alpha \eqqcolon \beta \in \End(X)$ for some $n\in \bZ_{>0}$ and $\alpha^*|_{\N^1(X)_\bQ} \coloneqq n^{-2} \beta^*|_{\N^1(X)_\bQ}$ is quasi-unipotent.
Applying \cite[Theorem~1.2]{Hu19} or \cite[Theorem~1.9]{Hu-lc} to $X$ and $\beta$ yields that the spectral radius of $\beta^*|_{H^2_{\et}(X, \bQ_\ell)}$ coincides with the spectral radius of $\beta^*|_{\N^1(X)_\bQ}$, which is equal to $n^2$.
It thus follows that the spectral radius of $\alpha^*|_{H^1_{\et}(X, \bQ_\ell)} \coloneqq n^{-1}\beta^*|_{H^1_{\et}(X, \bQ_\ell)}$ equals $1$.
In other words, all roots of the characteristic polynomial $P_\alpha(t) \coloneqq n^{-2g}P_{\beta}(nt) \in \bQ[t]$ have absolute value $1$.
Because of the absence of Kronecker's theorem for algebraic numbers, we do not know whether all roots of $P_\alpha(t)$ are roots of unity, or equivalently, whether $P_\alpha(t)\in \bZ[t]$.
\end{remark}

\begin{proof}[Proof of Theorem \ref{thm:B}]
By \cref{lemma:zero-entropy}, we see that the pullback $f^*|_{\N^1(X)_\bQ}$ of $f$ on $\N^1(X)_\bQ$ is quasi-unipotent.
Then by \cref{thm:C}, so is $V_\ell(f)$.
It follows from \cref{thm:A} that we can denote the Jordan canonical form of $V_\ell(f)$ by $J \oplus \ol{J}$, where $J = \bigoplus_i J_{\zeta_i,k_i} \in \Mat_{g}(\ol\bQ)$ for some roots of unity $\zeta_i\in \ol\bQ$ and $k_i\in\bZ_{>0}$.
The $k_i$ satisfying $\sum_i k_i = g$ are unique up to permutations.
Replacing $f$ by certain iterate, we may assume that $V_\ell(f)$ is unipotent by \cref{lemma:plov-iterate}, so that its Jordan canonical form could be written as
\begin{equation}
\label{eq:J-decomp}
J = \bigoplus_i J_{1,k_i} \in \Mat_{g}(\bN)
\end{equation}
for the same $k_i\in\bZ_{>0}$.

By \cref{def:plov}, we first have
\[
\plov(f) = \limsup_{n\to \infty} \frac{\log (\Delta_n^g)}{\log n},
\]
where
\[
\Delta_n \coloneqq \Delta_n(f, H_X) \coloneqq \sum_{m=0}^{n-1} (f^m)^*H_X.
\]
Thanks to \cref{lemma:plov}, it suffices to evaluate the degree of $\Delta_n^g$ as a polynomial in $n$.

Using the Riemann--Roch theorem for abelian varieties (see, e.g., \cref{thm:RR}), we have that
\begin{equation}
\label{eq:RR}
\frac{\Delta_n^g}{g!} = \chi(\sO_X(\Delta_n)) \quad \text{and} \quad \chi(\sO_X(\Delta_n))^2 = \deg(\phi_{\sO_X(\Delta_n)}).
\end{equation}
Set $\sL \coloneqq \sO_X(H_X)$.
Note that for any $m$, one has
\[
\phi_{(f^m)^*\!\sL} = \what{f^m} \circ \phi_{\sL} \circ f^m = \phi_{\sL} \circ \phi_{\sL}^{-1} \circ \what{f^m} \circ \phi_{\sL} \circ f^m = \phi_{\sL} \circ (f^m)^\dagger \circ f^m,
\]
where ${}^\dagger$ denotes the Rosati involution on $\End^\circ(X)$.
It follows that 
\begin{align*}
\deg(\phi_{\sO_X(\Delta_n)}) &= \deg(\phi_{\sL \otimes f^*\!\sL \otimes \,\cdots\, \otimes (f^{n-1})^*\!\sL}) \\
&= \deg(\phi_\sL + \phi_{f^*\!\sL} + \cdots + \phi_{(f^{n-1})^*\!\sL}) \\
&= \deg(\phi_\sL + \phi_{\sL} \circ f^\dagger \circ f + \cdots + \phi_{\sL} \circ (f^{n-1})^\dagger \circ f^{n-1}) \\
&= \deg(\phi_\sL) \cdot \deg(\id_X + f^\dagger \circ f + \cdots + (f^{n-1})^\dagger \circ f^{n-1}) \\
&= \deg(\phi_\sL) \cdot \det(V_\ell(\id_X + f^\dagger \circ f + \cdots + (f^{n-1})^\dagger \circ f^{n-1})) \\
&= \deg(\phi_\sL) \cdot \det\Bigg(\sum_{m=0}^{n-1} \, V_\ell((f^m)^\dagger) \cdot V_\ell(f^m)\Bigg).
\end{align*}
For the second last equality, see \cref{thm:char-poly}; the last equality follows from the fact that $V_\ell$ is a homomorphism of $\bQ_\ell$-algebras.
Note that under some fixed isomorphism $\End^\circ(X) \otimes_\bQ \ol{\bQ} \isom \Mat_{N}(\ol{\bQ})$, the Rosati involution becomes complex conjugation so that $V_\ell((f^m)^\dagger) = V_\ell(f^m)^*$ for any $m$ (see \cite[\S21, Application~I, Theorem~2]{Mumford} and \cref{prop:Milne}).
To calculate the determinant of the induced action on $V_\ell(X)$, we use the Jordan canonical form $J\oplus J$ of $V_\ell(f)$ with $J$ given in \cref{eq:J-decomp}.
Combining with \cref{prop:A*A} yields that
\begin{align*}
\deg (\deg(\phi_{\sO_X(\Delta_n)})) &= \deg \det\Bigg(\sum_{m=0}^{n-1} \, (V_\ell(f)^m)^* \cdot V_\ell(f)^m\Bigg) \\
&= \deg \det\Bigg(\sum_{m=0}^{n-1} \, (J^m \oplus J^m)^\sT \cdot (J^m \oplus J^m) \Bigg) \\
&= 2\sum_i k_i^2,
\end{align*}
where the outer $\deg$ means the degree of polynomials in $n$.
It thus follows from \cref{eq:RR} that the degree of $\chi(\sO_X(\Delta_n))$ and hence of $\Delta_n^g$ is $\sum_i k_i^2$.
We complete the proof of \cref{thm:B}.
\end{proof}


\begin{remark}
\label{rmk:optimal}
It follows from \cref{thm:B} that the upper bound in \cref{thm:quadratic} is optimal and could be realized by abelian varieties.
In fact, let $f$ be an automorphism of an abelian variety $X$ of dimension $g$ over $\bk$.
When $g=2m$ is even, choose $f$ and $X$ such that the Jordan canonical form of $V_\ell(f)$ is $J_{1,2}^{\oplus g} = J_{1,2}^{\oplus m} \oplus J_{1,2}^{\oplus m}$.
Then $\plov(f) = m \cdot 2^2 = 2g$.
When $g=2m-1$ is odd, choose $f$ and $X$ such that the Jordan canonical form of $V_\ell(f)$ is $J_{1,2}^{\oplus g-1} \oplus J_{1,1}^{\oplus 2} = (J_{1,2}^{\oplus m-1}\oplus J_{1,1})^{\oplus 2}$.
Then $\plov(f) = (m-1)\cdot 2^2 + 1^2 = 2g-1$.
\end{remark}

\begin{proof}[Proof of Theorem \ref{thm:D}]
By the same argument at the beginning of the proof of \cref{thm:B}, we can denote the Jordan canonical form of $V_\ell(f)$, or equivalently, of $f^*|_{H_{\et}^1(X,\bQ_\ell)}$ by $J \oplus J$, where $J = \bigoplus_i J_{1,k_i} \in \Mat_{g}(\bN)$ for some $k_i\in\bZ_{>0}$, since the maximum size of Jordan blocks of $f^*|_{\N^1(X)_\bQ}$ does not change after replacing $f$ by any iterate.
Without loss of generality, we may assume that $k(J) + 1 = \max_i k_i = k_1\geq k_2 \geq \cdots \geq k_i \geq \cdots$.
It suffices to show that $\|(f^n)^*|_{H_{\et}^2(X,\bQ_\ell)}\| \sim n^{2(k_1-1)}$ with respect any matrix norm $\| \cdot \|$, as $n\to \infty$.

Note that $H_{\et}^2(X,\bQ_\ell) = \wedge^2 H_{\et}^1(X,\bQ_\ell)$ and the wedge product is preserved by $f^*$.
So the pullback action $f^*|_{H_{\et}^2(X,\bQ_\ell)}$ could be represented by the second compound matrix $C_2(f^*|_{H_{\et}^1(X,\bQ_\ell)})$ of $f^*|_{H_{\et}^1(X,\bQ_\ell)}$ (see, e.g., \cite[\S 6.6]{KRY09}).
It thus follows that
\begin{align*}
\|(f^n)^*|_{H_{\et}^2(X,\bQ_\ell)}\| &= \|C_2((f^n)^*|_{H_{\et}^1(X,\bQ_\ell)})\| \\
&\sim \|C_2(J^n \oplus J^n)\| \\
&\sim \|C_2(J^n \oplus J^n)\|_{\max} \\
&\sim n^{2(k_1-1)} = n^{2k(J)},
\end{align*}
as $n\to \infty$, where $\| \cdot \|_{\max}$ denotes the (element-wise) max norm of a matrix and the last equivalence follows from the definition of $C_2$.
In fact, the largest $2$-by-$2$ minor of $J^n \oplus J^n$ is the same as the largest one of $J_{1,k_1}^n \oplus J_{1,k_1}^n$, which is equal to $\binom{n}{k_1-1}^2 \sim n^{2(k_1-1)}$.
\end{proof}

\begin{remark}
A similar calculation also shows that $\|(f^n)^*|_{H_{\et}^4(X,\bQ_\ell)}\| = \|C_4((f^n)^*|_{H_{\et}^1(X,\bQ_\ell)})\| \sim \|C_4(J^n \oplus J^n)\| \sim n^{\max\{4(k_1-2), \, 2(k_1+k_2-2)\}} = O(n^{4(g-2)})$, as $n\to \infty$; moreover, we have
\begin{align*}
\|(f^n)^*|_{H_{\et}^{2k}(X,\bQ_\ell)}\| &= O(2k(g-k)), \\
\|(f^n)^*|_{H_{\et}^{2k-1}(X,\bQ_\ell)}\| &= O(k(g-k)+(k-1)(g-k+1)).
\end{align*}
\end{remark}








\section*{Acknowledgments}


I would like to thank Jason Bell, Michel Brion, Jungkai Chen, Laura DeMarco, Keiji Oguiso, Bjorn Poonen, Zinovy Reichstein, Yuri Zarhin, Yishu Zeng, and De-Qi Zhang for stimulating discussions and conversations.
I specifically thank Hsueh-Yung Lin for kindly answering my many questions about their inspiring work \cite{LOZ} and Chen Jiang for providing \cref{prop:P_A^H} which significantly simplifies the proof of \cref{prop:A*A}.
Finally, I am grateful to the referee for his/her helpful suggestions and comments.

\appendix

\section{Growth rate of determinant of certain power sums associated with unipotent matrices}

\begin{center}
{\small \sc By fei hu and chen jiang}
\end{center}

In this appendix, we prove a technical \cref{prop:A*A} which plays a crucial role in the proof of \cref{thm:B}.
We first introduce a notation.
For any {\it unipotent} complex matrix $A\in\Mat_K(\bC)$, i.e., all eigenvalues of $A$ are $1$, and any positive definite Hermitian matrix $H\in \Mat_K(\bC)$, we define a function $P_A^H(n)$ as follows:
\begin{equation}
\label{eq:P_A^H}
P_A^H(n) \coloneqq \det\Bigg(\sum_{m=0}^{n-1} (A^m)^* H A^m \Bigg), \quad n\in \bZ_{>0},
\end{equation}
where ${}^*$ means the conjugate transpose (a.k.a. Hermitian transpose) of a complex matrix.

If we write $A=\mathrm{I}_K+N$, where $\mathrm{I}_K$ denotes the identity matrix and $N$ is a nilpotent matrix (e.g, $N^K=0$),
then $A^m=(\mathrm{I}_K+N)^m=\sum_{i=0}^{K-1}\binom{m}{i}N^i$, whose entries are polynomials in $m$.
It is easy to see that the entries of the matrix $\sum_{m=0}^{n-1} (A^m)^*HA^m$ are all polynomials in $n$ (with coefficients depending on $A$ and $H$), so is its determinant $P_A^H(n)$.

We first prove a key property about the degree of $P_A^H(n)$.

\begin{prop}
\label{prop:P_A^H}
Let $A\in \Mat_K(\bC)$ be a unipotent matrix and $H\in \Mat_K(\bC)$ be a positive definite Hermitian matrix.
Then the following two equivalent assertions hold.

\begin{enumerate}[label=\emph{(\arabic*)}, ref=\arabic*]
\item \label{prop:P_A^H-1} The degree of $P_A^H(n)$ is independent of the choice of $H$, i.e., $\deg P_A^{H'}(n) = \deg P_A^{H}(n)$ for any positive definite Hermitian matrix $H'\in \Mat_K(\bC)$.

\item \label{prop:P_A^H-2} The degree of $P_A^H(n)$ is a similarity invariant of $A$, i.e., $\deg P_A^H(n)=\deg P_B^H(n)$ for any $B\in \Mat_K(\bC)$ similar to $A$.
\end{enumerate}
In particular, we have that $\deg P_A^H(n) = \deg P_A(n)$, where
\begin{equation}
\label{eq:P_A}
P_A(n) \coloneqq P_A^{\,\mathrm{I}_K}(n) = \det\Bigg(\sum_{m=0}^{n-1} (A^m)^*A^m \Bigg), \quad n\in \bZ_{>0}.
\end{equation}
\end{prop}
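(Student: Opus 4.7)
The plan is to prove assertion (1) directly via a Loewner-order comparison, derive (1) $\Leftrightarrow$ (2) from a change-of-basis identity, and then read off the concluding formula by specializing (1) to $H = \mathrm{I}_K$.

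For (1), I will exploit the fact that any two positive definite Hermitian matrices $H, H'$ are comparable in the Loewner order: there exist constants $c, c' > 0$ such that $c H \leq H' \leq c' H$. Conjugating by the invertible matrix $A^m$ preserves the Loewner order, so summing over $0 \leq m \leq n-1$ yields
\[
c \sum_{m=0}^{n-1} (A^m)^* H A^m \;\leq\; \sum_{m=0}^{n-1} (A^m)^* H' A^m \;\leq\; c' \sum_{m=0}^{n-1} (A^m)^* H A^m,
\]
with all three sums strictly positive definite. Monotonicity of the determinant on positive semidefinite Hermitian matrices then yields $c^K P_A^H(n) \leq P_A^{H'}(n) \leq c'^K P_A^H(n)$. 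The delicate point, which I view as the main obstacle, is that to conclude equality of \emph{degrees} from this sandwich I need both $P_A^H(n)$ and $P_A^{H'}(n)$ to be strictly positive for all $n \geq 1$; this is guaranteed by the strict positive definiteness of $H, H'$ together with invertibility of $A$ (automatic from unipotency), so the two polynomials have the same asymptotic growth and hence the same degree.

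For the equivalence (1) $\Leftrightarrow$ (2), the key identity is
\[
(B^m)^* H B^m = Q^* (A^m)^* \bigl((Q^*)^{-1} H Q^{-1}\bigr) A^m Q \quad \text{whenever } B = Q^{-1} A Q,
\]
which upon summing and taking determinants gives $P_B^H(n) = |\det Q|^2 \, P_A^{H'}(n)$, where $H' \coloneqq (Q^*)^{-1} H Q^{-1}$ is again positive definite Hermitian; this shows (1) $\Rightarrow$ (2). Conversely, the Cholesky factorization $H = S^* S$ together with $B \coloneqq SAS^{-1}$ (similar to $A$) yields $P_A^H(n) = |\det S|^2 \, P_B^{\,\mathrm{I}_K}(n)$, which gives (2) $\Rightarrow$ (1). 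Specializing $H = \mathrm{I}_K$ in (1) then produces the concluding statement $\deg P_A^H(n) = \deg P_A(n)$ of the proposition.
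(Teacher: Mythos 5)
Your proposal is correct and follows essentially the same route as the paper's proof: the Loewner-order sandwich $cH \leq H' \leq c'H$ conjugated by $A^m$ and combined with monotonicity of the determinant gives assertion (1), and the two change-of-basis identities you write down (one sending $H$ to $(Q^*)^{-1}HQ^{-1}$, the other using the factorization $H=S^*S$) are exactly the ones used in the paper to establish (1) $\Rightarrow$ (2) and (2) $\Rightarrow$ (1). The "delicate point" you flag is handled correctly: each sum is a sum of positive definite matrices (as $A^m$ is invertible), so both polynomials are positive for all $n\geq 1$ and the two-sided constant bound forces equal degrees.
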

\begin{proof}
We first prove the assertion~\eqref{prop:P_A^H-1}.
Note that the set of positive definite Hermitian matrices in $\Mat_K(\bC)$ is an open convex cone.
So for any positive definite Hermitian matrix $H'\in \Mat_K(\bC)$, there exists a positive constant $c \gg 1$ such that
\begin{equation}
\label{eq:HH'}
c^{-1}H' < H < cH'.
\end{equation}
Here for two Hermitian matrices $H_1, H_2$, we use $H_1 < H_2$ to denote that $H_2-H_1$ is positive definite.
It is well known that if $H_1 < H_2$ for two positive definite Hermitian matrices, then $\det(H_1)\leq \det(H_2)$; see, e.g., \cite[Theorem~7.8.21 (Minkowski's determinant inequality)]{HJ13}.

It follows from Formula~\eqref{eq:HH'} that $(A^m)^*HA^m < (A^m)^*(cH')A^m$ for any $m\geq 0$ and hence
\[
\sum_{m=0}^{n-1} (A^m)^*HA^m < c\sum_{m=0}^{n-1} (A^m)^*H'A^m,
\]
as positive definite Hermitian matrices.
So by taking the determinants, we have 
\[
P_{A}^H(n)\leq c^KP_{A}^{H'}(n).
\]
By symmetry, one also has $P_{A}^{H'}(n)\leq c^KP_{A}^{H}(n)$.
So as polynomials, $\deg P_{A}^{H'}(n)=\deg P_{A}^{H}(n)$.

Next, we show the implication \eqref{prop:P_A^H-1} $\Longrightarrow$ \eqref{prop:P_A^H-2}.
If $B=SAS^{-1}$ for some invertible matrix $S\in \Mat_K(\bC)$,
then we have that
\begin{align*}
    P_{B}^H(n)= {}&\det\Bigg(\sum_{m=0}^{n-1} (S^{-1})^*(A^m)^* S^*HS A^mS^{-1}\Bigg) \\
    ={}&|\det(S)|^{-2} \cdot \det\Bigg(\sum_{m=0}^{n-1} (A^m)^* S^*HS A^m\Bigg) \\
    ={}&|\det(S)|^{-2} \cdot P_A^{S^*\!HS}(n),
\end{align*}
where $S^*HS$ is still a positive definite Hermitian matrix.
It follows readily from the assertion~\eqref{prop:P_A^H-1} that $\deg P_{B}^H(n)=\deg P_{A}^{S^*\!HS}(n)=\deg P_{A}^H(n)$.

Lastly, we prove the implication \eqref{prop:P_A^H-2} $\Longrightarrow$ \eqref{prop:P_A^H-1}.
It is known that for any positive definite Hermitian matrix $H\in \Mat_K(\bC)$, we can write $H = S^*S$ for some invertible matrix $S\in \Mat_K(\bC)$.
Denote $B\coloneqq SAS^{-1}$.
Then we have that
\begin{align*}
P_{A}^{H}(n) &= \det\Bigg(\sum_{m=0}^{n-1} (A^m)^* S^*S A^m\Bigg) \\
&= |\det(S)|^{2} \cdot \det\Bigg(\sum_{m=0}^{n-1} (S^{-1})^*(A^m)^* S^*S A^mS^{-1}\Bigg) \\
&= |\det(S)|^{2} \cdot \det\Bigg(\sum_{m=0}^{n-1} (B^m)^* B^m\Bigg) = |\det(S)|^{2} \cdot P_B^{\,\mathrm{I}_K}(n).
\end{align*}
It thus follows from the assertion~\eqref{prop:P_A^H-2} that $\deg P_A^H(n) = \deg P_B^{\,\mathrm{I}_K}(n) = \deg P_A^{\,\mathrm{I}_K}(n)$.
\end{proof}



Based on the above proposition, we are able to gives an explicit formula for $\deg P_A(n)$.

\begin{proposition}
\label{prop:A*A}
Let $A\in \Mat_K(\bC)$ be a unipotent matrix such that its Jordan canonical form is $J_A = \bigoplus_{i=1}^{p} J_{1,k_i}\in\Mat_K(\bN)$.
Then we have that
\[
\deg P_A(n) = \deg P_{J_A}(n) = \sum_{i=1}^{p} k_i^2.
\]
\end{proposition}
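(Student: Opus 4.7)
\textbf{Proof proposal for Proposition \ref{prop:A*A}.} The plan is to first use the similarity invariance from Proposition \ref{prop:P_A^H}(2) to replace $A$ by its Jordan form $J_A$. Because $J_A = \bigoplus_{i=1}^p J_{1,k_i}$ is block-diagonal, the powers $J_A^m$ and hence the terms $(J_A^m)^*J_A^m$ and the sum $\sum_{m=0}^{n-1}(J_A^m)^*J_A^m$ are all block-diagonal with blocks corresponding to the individual Jordan blocks. Taking determinants gives $P_{J_A}(n) = \prod_{i=1}^p P_{J_{1,k_i}}(n)$, so the problem reduces to showing that $\deg P_{J_{1,k}}(n) = k^2$ for each single standard Jordan block $J_{1,k}$.

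For this reduced problem, I would compute the matrix $M_k(n) \coloneqq \sum_{m=0}^{n-1}(J_{1,k}^m)^*J_{1,k}^m$ explicitly. Writing $J_{1,k} = \mathrm{I}_k + N_k$ with $N_k$ the standard nilpotent shift, one has $(J_{1,k}^m)_{ij} = \binom{m}{j-i}$ (for $0 \le i \le j \le k-1$, and zero otherwise), so the $(i,j)$-entry of $(J_{1,k}^m)^*J_{1,k}^m$ equals $\sum_{\ell=0}^{\min(i,j)}\binom{m}{i-\ell}\binom{m}{j-\ell}$, a polynomial in $m$ of degree at most $i+j$ (attained at $\ell = 0$). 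Summing over $m$ from $0$ to $n-1$ shows that each entry $(M_k(n))_{ij}$ is a polynomial in $n$ of degree at most $i+j+1$.

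Consequently, for any permutation $\sigma$ of $\{0,\ldots,k-1\}$, the diagonal term in the Leibniz expansion contributes at most $\sum_{i=0}^{k-1}(i+\sigma(i)+1) = k^2$ to the degree of $\det M_k(n)$; crucially, every $\sigma$ gives exactly the same bound $k^2$. Therefore the coefficient of $n^{k^2}$ in $\det M_k(n)$ equals the determinant of the leading-coefficient matrix $C_k = \bigl(c_{ij}\bigr)_{i,j=0}^{k-1}$, where $c_{ij}$ is the coefficient of $n^{i+j+1}$ in $(M_k(n))_{ij}$. A direct calculation from the binomial leading terms gives $c_{ij} = \frac{1}{(i+j+1)\,i!\,j!}$.

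The main (and only real) obstacle is then to verify $\det C_k \neq 0$. I would handle this by factoring $1/i!$ out of row $i$ and $1/j!$ out of column $j$, reducing $\det C_k$ to (a positive multiple of) the classical Hilbert determinant $\det\bigl(\tfrac{1}{i+j+1}\bigr)_{i,j=0}^{k-1}$, which is the Cauchy determinant with $x_i = i$ and $y_j = j+1$. The Cauchy formula yields
\[
\det\Bigl(\frac{1}{i+j+1}\Bigr)_{i,j=0}^{k-1} = \frac{\prod_{0\le i<j\le k-1}(i-j)^2}{\prod_{0\le i,j\le k-1}(i+j+1)} \neq 0.
\]
Hence $\deg P_{J_{1,k}}(n) = k^2$, and combining with the block decomposition gives $\deg P_A(n) = \deg P_{J_A}(n) = \sum_{i=1}^p k_i^2$, as desired.
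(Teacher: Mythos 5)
Your proposal is correct and follows essentially the same route as the paper's proof: reduce to the Jordan form via the similarity invariance of Proposition \ref{prop:P_A^H}, split the determinant over the blocks, compute the leading term of each entry of $\sum_{m=0}^{n-1}(J_{1,k}^m)^{\mathsf T}J_{1,k}^m$, and identify the coefficient of $n^{k^2}$ with a (rescaled) Hilbert determinant, which is nonzero by the Cauchy formula. The only differences are cosmetic (zero-based indexing and phrasing the homogeneity step via the Leibniz expansion rather than via the matrix $D_n$ of highest-degree terms).
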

\begin{proof}
The first equality has been proved in \cref{prop:P_A^H} \eqref{prop:P_A^H-2}.
For the second one, since
\[
P_{J_A}(n) = \det\Bigg(\sum_{m=0}^{n-1} (J_A^m)^\sT J_A^m \Bigg) = \prod_{i=1}^p \det\Bigg(\sum_{m=0}^{n-1} (J_{1,k_i}^m)^\sT J_{1,k_i}^m \Bigg) = \prod_{i=1}^p P_{J_{1,k_i}}(n),
\]
it suffices to consider the case when $A$ has only one Jordan block, say, $J_{1,k}$. In other words, we just need to show that $\deg P_{J_{1,k}}(n) = k^2$.

Write $J_{1,k} = \mathrm{I}_k + J_{0,k}$.
Note first that
\[
(J_{1,k}^m)^\sT J_{1,k}^m = \sum_{i,j=1}^{k} \binom{m}{i-1}\binom{m}{j-1} (J_{0,k}^{i-1})^\sT J_{0,k}^{j-1}.
\]
It is not difficult to check that the $(i,j)$-th entry of $(J_{1,k}^m)^\sT J_{1,k}^m$ is a polynomial in $m$ of degree $i+j-2$ and its leading coefficient is
$
\frac{1}{(i-1)!(j-1)!}.
$
Taking sum over $m$ yields that the $(i,j)$-th entry of $\sum_{m=0}^{n-1} (J_{1,k}^m)^\sT J_{1,k}^m$ is a polynomial in $n$ of degree $i+j-1$ and its leading coefficient is
\[
\frac{1}{(i-1)!(j-1)!(i+j-1)}.
\]
Let $D_n$ be the matrix which collects the highest degree term of each entry of $\sum_{m=0}^{n-1} (J_{1,k}^m)^\sT J_{1,k}^m$, i.e.,
\begin{equation*}
D_n \coloneqq \Bigg( \frac{n^{i+j-1}}{(i-1)!(j-1)!(i+j-1)} \Bigg)_{1\leq i,j\leq k}.
\end{equation*}
It suffices to show that $\det D_n$ has degree $k^2$ with nonzero coefficient.

First, it is easy to see that $\det D_n$ is homogeneous of degree $\sum_{i=1}^k i + \sum_{j=1}^k j - k = k^2$.
The coefficient of $n^{k^2}$ in $\det D_n$ is exactly
\begin{align*}
\det\Bigg(\frac{1}{(i-1)!(j-1)!(i+j-1)} \Bigg)_{1\leq i,j \leq k} &= \frac{1}{\big(\prod_{i=1}^{k-1} i! \big)^2} \det\Bigg(\frac{1}{i+j-1} \Bigg)_{1\leq i,j \leq k} \\
&= \frac{\big(\prod_{i=1}^{k-1} i! \big)^2}{\prod_{i=1}^{2k-1} i!} > 0.
\end{align*}
Here $\big(\frac{1}{i+j-1}\big)_{1\leq i,j \leq k}$ is a Hilbert matrix (which is a special case of Cauchy matrices) and its determinant has been well known since Cauchy; see, e.g., \cite[\S 0.9.12]{HJ13}.
It follows readily that $\deg P_{J_{1,k}}(n) = \deg \det D_n = k^2$.
\end{proof}

\bibliographystyle{amsalpha}
\bibliography{../mybib}

\end{document}